\theoremstyle{plain}
\title{Mutation invariance of Khovanov homology over $\Ftwo$}
\author{Stephan M. Wehrli}
\address{Institut de Math\'ematiques de Jussieu;
Universit\'e Paris 7 (Denis Diderot); 175 rue du Chevaleret,
bureau 7B3; 75013 Paris; France}
\email{wehrli@math.jussieu.fr}
\theoremstyle{plain}
\newtheorem{theorem}{Theorem}[section]
\newtheorem{lemma}[theorem]{Lemma}
\newtheorem{proposition}[theorem]{Proposition}
\newtheorem{corollary}[theorem]{Corollary}
\theoremstyle{definition}
\newtheorem{definition}[theorem]{Definition}
\newtheorem{remark}[theorem]{Remark}
\newcommand{\R}{\ensuremath{\mathbb{R}}}
\newcommand{\Z}{\ensuremath{\mathbb{Z}}}
\newcommand{\C}{\ensuremath{\mathbb{C}}}
\newcommand{\dd}{\partial_{\bullet}}
\newcommand{\Ftwo}{\mathbb{F}_2}
\newcommand{\Mat}{\operatorname{Mat}}
\newcommand{\Kom}{\operatorname{Kom}}
\newcommand{\Cobdl}[1]{\mathcal{C}ob(#1)_{\bullet/\ell}}
\begin{document}
\bibliographystyle{halpha}

\begin{abstract}
We prove that Khovanov homology and Lee homology
with coefficients in $\Ftwo=\Z/2\Z$ are
invariant under component-preserving link mutations.
\end{abstract}
\maketitle


\section{Introduction}
Khovanov homology is a refinement of the Jones polynomial
\cite{jones-1985} which was discovered by Mikhail Khovanov
\cite{MR1740682} in the year 1999, and which was subsequently
generalized through the work Eun Soo Lee \cite{lee-2002}
and Dror Bar-Natan \cite{barnatan-2005-9}.
In 2003, the author \cite{wehrli-2003} discovered
a series of examples of mutant links \cite{MR0258014}
with different (integer coefficient) Khovanov homology.
Despite this discovery, the question whether there are
mutant knots with different Khovanov homology remained open.
In this paper, we partially answer this question. We prove:

\begin{theorem}\label{thm:main}
The graded homotopy type of $Kh(L)$ is invariant
under component-preserving link mutation.
\end{theorem}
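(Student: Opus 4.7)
The plan is to localize the problem to the 4-ended tangle $T$ inside the Conway sphere along which $L$ is mutated. In Bar-Natan's formalism the complex $Kh(L)$ is obtained by gluing the tangle complex $[T] \in \Kom(\Cobdl{4})$ to the complex of the complementary tangle $T'$, where the gluing is specified by how the four boundary points of $T$ are paired up by $T'$. The component-preserving hypothesis ensures that this pairing is the same for the mutated tangle $T^\mu$, so the theorem reduces to the following local statement: for every 4-ended tangle $T$ and every Conway involution $\mu$, the complexes $[T]$ and $[T^\mu]$ are chain homotopy equivalent in $\Kom(\Cobdl{4})$ over $\Ftwo$ after the boundary is identified via $\mu$.

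The first step is to apply Bar-Natan's simplification algorithm (delooping together with iterated Gaussian elimination) to reduce $[T]$ to a minimal form. Since $\Cobdl{4}$ has only two crossingless generators, namely the horizontal smoothing and the vertical smoothing, the objects of the minimal complex are direct sums of copies of these two planar pairings, and its differentials are $\Ftwo$-linear combinations of dotted saddle and identity cobordisms. Uniqueness of the minimal model reduces the desired homotopy equivalence to an isomorphism question: show that the minimal complex of $T$ and that of $T^\mu$ coincide once the boundary is relabeled by $\mu$.

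The heart of the argument, and also the main obstacle, is to construct this isomorphism explicitly. Each Conway involution $\mu$ acts on $\Cobdl{4}$ by permuting boundary points, inducing an action on the minimal complexes; the task is to produce, for every $T$, a chain automorphism of the minimal complex that cancels the action of $\mu$. The most delicate case is the mutation that reflects the top and bottom boundary points of $T$: a reflection interacts nontrivially with dots and handles on morphisms in $\Cobdl{4}$, and verifying that the candidate automorphism intertwines the reduced differentials requires a careful analysis of the combinatorics of dotted cobordisms appearing in the minimal model. It is precisely the sign ambiguities that would obstruct such an automorphism over $\Z$ (and that are responsible for the 2-component counterexamples of \cite{wehrli-2003}) which vanish in characteristic two, making the construction possible. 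Once the tangle-level homotopy equivalence is in place, transporting it back through the gluing with $T'$ yields the claimed chain homotopy equivalence between $Kh(L)$ and $Kh(L^\mu)$.
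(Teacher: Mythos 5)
The proposal has a genuine gap at its central step. You propose to prove the purely local statement that, for every $4$-ended tangle $T$ and every mutation involution, the minimal complexes of $[T]$ and $[T^\mu]$ in $\Kom(\mathcal{C}')$ coincide once the boundary points are relabeled. But if such a local chain isomorphism $Kh(T)\simeq Kh(R_z(T))$ existed in $\Kom(\mathcal{D},P)$, then tensoring with $Kh(T')$ for an \emph{arbitrary} complementary tangle $T'$ (Theorem~\ref{thm:tensor}) would immediately give $Kh(L)\simeq Kh(L')$, with no hypothesis on the connectivity of $T'$ and hence no need for the mutation to be component-preserving. The whole point of reducing to \emph{crossed} $z$-mutations (Lemma~\ref{lem:mutation}) is that this stronger statement is not available; the paper never claims, and its structure suggests one cannot expect, an isomorphism of the inner tangle complexes by themselves.

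What the paper does instead is fundamentally global. After delooping, the effect of $R_z$ on the inner complex is not the identity but the dot rotation functor $R_\bullet(S)=S+(x_a+x_c)\dd S$ (Lemma~\ref{lem:drotation1}), and $\mathsf{D}'(Kh(T))$ and $R_\bullet(\mathsf{D}'(Kh(T)))$ need not be chain isomorphic. The isomorphism $\varphi\colon Kh(L)\to Kh(L')$ is built on the full tensor product and crucially uses the \emph{outer} complex $Kh(T')$: one sets $\varphi=\prod_k\bigl(1\otimes 1+(\dd\delta)\otimes h_k\bigr)$, where the $h_k$ are the dot migration homotopies on $Kh(T')$ attached to the crossings of $T'$ lying along the arc $\alpha$ that joins $a$ to $c$. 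Lemma~\ref{lem:homotopy} gives $d\circ h_k+h_k\circ d=X_k+X_{k+1}$, and a telescoping sum (Corollary~\ref{cor:phimigration}) shows that conjugating $1\otimes d$ by $\varphi$ produces the extra term $(\dd\delta)\otimes(X_a+X_c)=\bigl((x_a+x_c)\dd\delta\bigr)\otimes 1$, which is exactly what turns $\delta\otimes 1$ into $R_\bullet(\delta)\otimes 1$. Thus the crossed connectivity of $T'$ is not merely a condition ensuring the gluing pattern is preserved, as your proposal treats it; it is what guarantees the existence of the arc from $a$ to $c$ along which dots migrate, and without it the automorphism $\varphi$ cannot be constructed. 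Your observation that sign ambiguities vanish over $\Ftwo$ is in the right spirit (the derivation $\dd$, Corollary~\ref{cor:ddcommute}, and the telescoping cancellation all require characteristic two), but the mechanism by which this is exploited is the dot rotation/dot migration interplay just described, not a local automorphism of the inner minimal complex.
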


In this theorem, $Kh(L)$ stands for a variant
of Bar-Natan's formal Khovanov bracket \cite{barnatan-2005-9},
which generalizes
both Khovanov homology with $\Ftwo$ coefficients
and Lee homology \cite{lee-2002} with $\Ftwo$ coefficients.
To prove the theorem, we will employ
an argument that was outlined by Bar-Natan in 2005
\cite{barnatan-2005}. While Bar-Natan's
argument had some gaps, the author realized that these
gaps can be filled if one works over $\Ftwo$ coefficients.
In 2007, the author presented a complete proof of
Theorem~\ref{thm:main} at
the `Knots in Washington XXIV' conference in Washington D.C.,
and at the `Link homology and Categorification' conference
in Kyoto \cite{wehrli-2007}.

More recently, Jonathan Bloom \cite{bloom-2009-3} discovered
an alternative and completely independent proof of mutation invariance.
Bloom's proof has the advantage that it works not only over
$\Ftwo$ coefficients, but rather extends to
a proof showing that the odd version
of the integer coefficient Khovanov homology
(defined as in \cite{ozsvath-2007})
is invariant under arbitrary
link mutations. On the other hand, the proof given in this paper
has the advantage that it also implies that \emph{Lee homology}
with $\Ftwo$ coefficients is invariant under component-preserving
link mutation.

The paper is organized as follows.
In Section~\ref{s:mutation},
we show that every \emph{component-preserving link mutation}
can be realized by a finite sequence of \emph{crossed $z$-mutations}
and \emph{isotopies}. In Section~\ref{s:bracket}, we
introduce the variant of the formal Khovanov bracket
that we will use throughout this paper. This variant
takes values in a category whose morphisms are
formal $\Ftwo$-linear combinations of properly embedded
$2$-cobordisms, decorated by finitely many distinct dots,
and considered up to some relations.
In Section~\ref{s:tools}, we
discuss algebraic operations for
manipulating the dots that appear in a decorated cobordism,
and
in Section~\ref{s:proof}, we use these operations
to prove that our variant of the formal Khovanov bracket
is invariant under crossed $z$-mutation.


\section{Conway mutation}\label{s:mutation}
Let $U\subset\R^2$ be the closure of a domain in $\R^2$,
and let $P\subset\partial U$ a finite subset of $\partial U$.
A \emph{tangle} above $(U,P)$ is
a properly embedded compact
$1$-manifold $\mathcal{T}\subset U\times\R$
with $\partial\mathcal{T}=P\times\{0\}$.
To  represent a tangle above $(U,P)$,
we use a plane diagram $T\subset U$ with $\partial T=P$.
In the case where $T$ is a plane diagram of
a tangle $\mathcal{T}$ above the unit disk
$U=\mathcal{D}:=\{z\in\C=\R^2\colon |z|\leq 1\}$,
and $P$ is the set
$P:=\{a,b,c,d\}\subset\partial\mathcal{D}$ where
$a,b,c,d$ are the points
$\exp(i\pi n/4)\in\C=\R^2$ for $n=1,3,5,7$ (in this order),
then we denote by $R_x(T),R_y(T),
R_z(T)$ the plane diagrams of the tangles
obtained by rotating $\mathcal{T}\subset\mathcal{D}\times\R\subset\R^3$
by $180^{\circ}$ around the $x$-, $y$- and $z$-axis respectively.

\begin{figure}[!h]
$$
\begin{array}{c}
   \includegraphics[height=3cm]{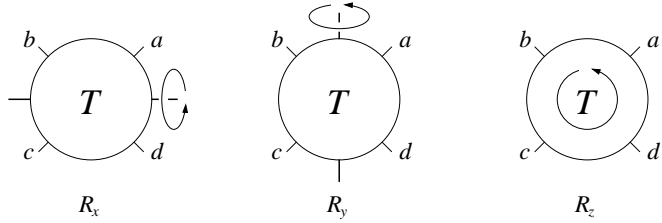}
\end{array}
$$
\caption{Rotations $R_x$, $R_y$, $R_z$.}
\end{figure}

Let $\mathcal{D}^c:=\R^2\setminus\operatorname{Int}(\mathcal{D})$
and $P:=\{a,b,c,d\}$.
If $\mathcal{T}$ is a tangle over $(\mathcal{D},P)$,
and $\mathcal{T}'$
is a tangle over $(\mathcal{D}^c,P)$,
then the union $\mathcal{T}\cup\mathcal{T}'$
is a link $\mathcal{L}=\mathcal{T}\cup\mathcal{T}'\subset \R^2\times\R=\R^3$.

\begin{definition}\label{def:mutation}
Two links $\mathcal{L}$ and $\mathcal{L}'$
are called \emph{elementary Conway mutants of each other}
\cite{MR0258014}
if there is a rotation $R\in\{R_x,R_y,R_z\}$
and two tangle diagrams $T\subset\mathcal{D}$
and $T'\subset\mathcal{D}^c$ with
$\partial T=\partial T'=P$ and
such that $T\cup T'$ is a diagram for $\mathcal{L}$
and $R(T)\cup T'$ is a diagram for $\mathcal{L}'$.
Depending on whether $R=R_x$, $R_y$ or $R_z$,
we say that the diagrams $T\cup T'$ and
$R(T)\cup T'$ are related by \emph{$x$-, $y$-} or
\emph{$z$-mutation}.
\end{definition}

\begin{remark}
If $\mathcal{L}$ and $\mathcal{L}'$
are oriented, then we require that
$T\cup T'$ is a diagram for $\mathcal{L}$,
and $R(T)\cup T'$ or $R(-T)\cup T'$ (whichever of
the two is oriented consistently)
is a diagram for $\mathcal{L}'$.
\end{remark}

\begin{definition}\label{def:crossed}
We say that $T\cup T'$
and $R(T)\cup T'$ are related by a
\emph{crossed mutation} if the tangle
corresponding to $T'\subset\mathcal{D}^c$
has \emph{crossed connectivity}, i.e. if one of
its arcs has endpoints at
$\{a\}\times\{0\}$ and $\{c\}\times\{0\}$,
and the other arc has endpoints at
$\{b\}\times\{0\}$ and $\{d\}\times\{0\}$.
\end{definition}

\begin{definition}
We say that $\mathcal{L}=\mathcal{T}\cup\mathcal{T}'$
and $\mathcal{L}'=R(\mathcal{T})\cup\mathcal{T}'$
are related by a \emph{component-preserving mutation}
if the union $R(\alpha)\cup\alpha'$ is a connected
component of $\mathcal{L}'$ if and only if the union
$\alpha\cup\alpha'$ is a connected component of $\mathcal{L}$,
for any two arc components $\alpha\subset\mathcal{T}$ and
$\alpha'\subset\mathcal{T}'$.
\end{definition}

\begin{figure}
$$
\begin{array}{c}
   \includegraphics[height=1.9cm]{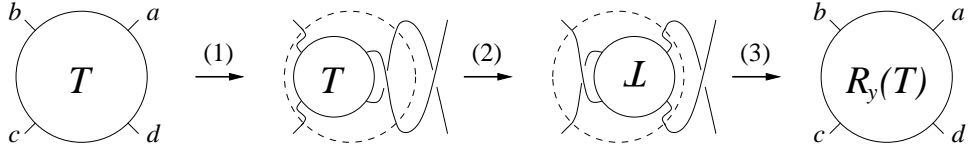}
\end{array}
$$
\caption{\label{fig:yviaz}
Decomposing a $y$-mutation into three steps:
(1) a Reidemeister move of type II;
(2) a $z$-mutation along the dashed circle;
(3) an isotopy in $\R^3$ that rotates $\mathcal{T}$ around
the $x$-axis
and thus untwists the crossings on either side of $\mathcal{T}$.
}
\end{figure}

The following lemma allows us to reduce Theorem~\ref{thm:main}
to Proposition~\ref{prop:main} below.

\begin{lemma}\label{lem:mutation}
Let $\mathcal{L}$ and $\mathcal{L}'$ be two
links that are related by
component-preserving mutation,
and let $D$ be a planar diagram of $\mathcal{L}$
and $D'$ a planar diagram of $\mathcal{L}'$.
Then $D$ can be transformed into $D'$
by a sequence of Reidemeister moves
and crossed $z$-mutations.
\end{lemma}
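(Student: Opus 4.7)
The plan is to reduce the given component-preserving mutation to a finite sequence of Reidemeister moves and crossed $z$-mutations, via case analysis on the mutation type and on the connectivity pairings of the two tangles.

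First, I would reduce to $z$-mutations. Figure~\ref{fig:yviaz} realizes any $y$-mutation as a Reidemeister~II move, followed by a $z$-mutation on a slightly enlarged disk, followed by an untwisting isotopy; a symmetric construction (or the relation $R_x = R_z \circ R_y$) handles $x$-mutations analogously. It therefore suffices to show that every component-preserving $z$-mutation decomposes into Reidemeister moves and crossed $z$-mutations.

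Next, I would classify component-preserving $z$-mutations by the pairings $\pi_T$ and $\pi_{T'}$ of the four boundary points cut out by $T$ and $T'$ (each one of $\{ab,cd\}$, $\{ad,bc\}$, or $\{ac,bd\}$). A direct check of the definition shows that the condition holds iff either \textbf{(A)} $\pi_T = \pi_{T'} = \{ac,bd\}$ (in which case the mutation is already crossed), or \textbf{(B)} $\pi_T \neq \pi_{T'}$ (in which case only one link-component passes through the mutation sphere, so the condition is vacuous). Indeed, if $\pi_T = \pi_{T'}$ is a parallel pairing, then $R_z$ interchanges the two arc-pairs that form the components and the condition fails. Case B splits further: if $\pi_{T'} = \{ac,bd\}$ the mutation is crossed; if $\pi_T = \{ac,bd\}$ with $\pi_{T'}$ parallel, then applying the global rotation $R_z$ (an ambient isotopy of $\R^3$) to $\mathcal{L}'$ identifies the mutation with a $z$-mutation on $T'$ in $\mathcal{D}^c$, which is crossed (the new exterior tangle $T$ having crossed connectivity); a further ambient isotopy then normalizes the disk back to $\mathcal{D}$.

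The remaining sub-case, in which both $\pi_T$ and $\pi_{T'}$ are parallel and different, is the main obstacle. Here I would argue that $\mathcal{L}$ and $\mathcal{L}'$ are in fact ambient isotopic, so that $D$ and $D'$ are related by Reidemeister moves alone. The idea is that the single link-component passing through the mutation sphere threads through all four boundary points, so the two interior arcs of $T$ can be slid along this component (via an ambient isotopy of $\R^3$) to realize the $R_z$ rotation without crossing $T'$. Verifying this isotopy carefully is the heart of the proof; once established, Reidemeister's theorem supplies the desired sequence of R-moves.
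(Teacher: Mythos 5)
Your initial reductions and case split are sound and track the paper's proof: the reduction of $x$- and $y$-mutations to $z$-mutations via Figure~\ref{fig:yviaz}, the observation that equal parallel pairings $\pi_T=\pi_{T'}$ are never component-preserving, and the handling of the sub-cases in which one of the two tangles has crossed connectivity (either directly, or after interchanging the roles of $T$ and $T'$ by a global rotation / planar isotopy). So far this is essentially the paper's argument.

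The gap is in your final paragraph, where you assert that when $\pi_T$ and $\pi_{T'}$ are both parallel and distinct, $\mathcal{L}$ and $\mathcal{L}'$ are ambient isotopic. This claim is unproven in your write-up (you flag it as ``the heart of the proof'' without supplying the isotopy), and it is in fact false: mutation in the parallel-different configuration can change the knot type, so Reidemeister moves alone cannot relate $D$ to $D'$ here. The heuristic of ``sliding the interior arcs along the component'' breaks down as soon as the arcs of $T$ are knotted or linked nontrivially with $T'$; there is no reason the slide can avoid crossing $T'$. What the paper actually does in this case is different and is the real content of the lemma: assuming $T'$ has horizontal connectivity, it uses the decomposition of Figure~\ref{fig:yviaz} to realize the $z$-mutation not as an isotopy but as a \emph{composition of two crossed $z$-mutations on larger auxiliary disks plus Reidemeister moves}. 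Concretely, a Reidemeister~II move followed by a $z$-mutation along a larger dashed circle (which, precisely because $T'$ has horizontal connectivity, has crossed outer connectivity) and an untwisting isotopy takes $T\cup T'$ to $R_y(T)\cup T'$; a second such step takes $R_y(T)\cup T'$ to $R_y(T)\cup R_x(T')$; and a final global $R_x$-rotation identifies the latter with $R_z(T)\cup T'=D'$, using $R_z=R_x\circ R_y$. To repair your proof you would need to replace the unsupported isotopy claim with an argument of this kind.
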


\begin{proof}
It is easy to see that the three different types
of mutation ($x$-, $y$- and $z$-mutation)
are topologically equivalent. Indeed, Figure~\ref{fig:yviaz}
shows how a $y$-mutation can be obtained by
performing a Reidemeister move of type II, followed
by a $z$-mutation, followed by an
isotopy in $\R^3$, and analogously, an $x$-mutation
can be reduced to a $z$-mutation.
Thus, we can assume without
loss of generality that $D$ and $D'$ are
related by a $z$-mutation,
i.e. $D=T\cup T'$ and $D'=R_z(T)\cup T'$ for suitable
tangle diagrams $T\subset\mathcal{D}$ and
$T'\subset\mathcal{D}^c$. If $T'$ has crossed
connectivity, then there is nothing to prove,
and if $T$ has crossed connectivity,
then we can interchange the roles
of $T$ and $T'$ by applying a planar isotopy
which moves $T'$ into $\mathcal{D}$ and $T$ out of
$\mathcal{D}$. Thus, we only need to care about the
case where neither $T$ nor $T'$ has crossed connectivity.
In this case, either $T$ or $T'$  must have horizontal connectivity
(i.e., represent a tangle that contains an arc with
endpoints at $\{a\}\times\{0\}$ and $\{b\}\times\{0\}$),
for otherwise the mutation would not be component-preserving.
After interchanging the roles of $T$ and $T'$ if necessary,
we can assume that $T'$ has horizontal connectivity.
But then the $z$-mutation in Step~(2) of Figure
is a crossed $z$-mutation, and hence Figure
shows that $D=T\cup T'$ can be transformed into
$R_y(T)\cup T'$ by Reidemeister moves
and a crossed $z$-mutation. A similar argument shows
$R_y(T)\cup T'$ can be transformed into $R_y(T)\cup R_x(T')$
by Reidemeister moves and a crossed $z$-mutation,
and since $R_z=R_x\circ R_y$, the latter diagram is isotopic
to $R_x\left(R_y(T)\cup R_x(T')\right)=R_z(T)\cup T'=D'$,
whence the proof is complete.
\end{proof}

The following proposition is the main result of this paper.
Its proof will be given in Section~\ref{s:proof}.

\begin{proposition}\label{prop:main}
If two link diagrams are related by a crossed $z$-mutation,
then their formal Khovanov brackets are isomorphic.
\end{proposition}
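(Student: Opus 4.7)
The plan is to construct an explicit isomorphism of chain complexes
$\Phi\colon Kh(T\cup T') \to Kh(R_z(T)\cup T')$ in the category of bounded complexes over $\Cobdl{\cdot}$.
I will match the terms of the two cubes of resolutions using the $R_z$-symmetry afforded by the crossed connectivity of $T'$, and then verify compatibility with the saddle differentials by means of the dot-manipulation operations set up in Section~\ref{s:tools}.
By Lemma~\ref{lem:mutation} this local statement suffices for Theorem~\ref{thm:main}, so I can concentrate entirely on the picture inside the mutation disk.

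For the object-level identification, observe that $R_z$ permutes the crossings of $T$ by an involution $\sigma$, giving a bijection $s\mapsto \sigma(s)$ on Kauffman states with $R_z(T_s)=(R_z(T))_{\sigma(s)}$. Each resolution $T_s$ has either horizontal ($a$-$b$, $c$-$d$) or vertical ($a$-$d$, $b$-$c$) connectivity; after gluing with the crossed $T'$ (connectivity $a$-$c$, $b$-$d$), the four boundary points merge into a single outer circle together with some closed circles lying in the interior of $\mathcal{D}$. Because the two arcs of $T'$ are each set-wise preserved by $R_z$, there is a planar isotopy of $\R^2$ that rotates $\mathcal{D}$ through $180^\circ$ while unwinding $\mathcal{D}^c$ back onto itself; this identifies $T_s\cup T'$ with $R_z(T_s)\cup T'$ as planar diagrams and yields the required bijection of objects at every vertex of the two cubes.

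The heart of the argument is showing that $\Phi$ intertwines the differentials. Each differential of $Kh(T\cup T')$ coming from a crossing of $T$ is a saddle cobordism crossed with the identity on $T'$; under the object-level isotopy, this saddle transports to a surface in the target cube, and one must check that the transported surface agrees with the corresponding saddle in $Kh(R_z(T)\cup T')$ modulo the relations defining $\Cobdl{\cdot}$. The two surfaces differ at worst by the cobordism automorphism induced by the $R_z$-unwinding isotopy, essentially a half-twist along the outer closure circle. This is where the $\Ftwo$-coefficient hypothesis enters in a decisive way: the dot-manipulation rules of Section~\ref{s:tools} allow one to slide dots through such a twist, turning what over $\Z$ would be equalities up to sign into genuine equalities.

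I expect the main obstacle to be precisely this comparison of differentials. The concrete plan is to proceed crossing by crossing: at each crossing of $T$, write down the local difference between the transported saddle and the target saddle as an explicit element of the morphism space of $\Cobdl{\cdot}$, and show that this difference reduces to zero using the $\Ftwo$-linear dot relations. If a strict termwise equality cannot be achieved everywhere, the same operations should produce an explicit chain homotopy, which still yields an isomorphism in the homotopy category and is all that Proposition~\ref{prop:main} demands.
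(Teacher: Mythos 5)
Your high-level plan (exploit the $R_z$-symmetry of the mutation disk, construct an explicit chain-level identification, rely on $\Ftwo$-coefficient dot manipulations) points in the right direction, but the two load-bearing technical ideas of the paper are absent, and the substitute you offer would fail. The object-level identification by a ``planar isotopy rotating $\mathcal{D}$ while unwinding $\mathcal{D}^c$'' does not work: such an ambient isotopy drags the arcs of $T'$ along with it, so the resulting diagram is not $R_z(T_s)\cup T'$ but $R_z(T_s)$ glued to a twisted image of $T'$. If the two diagrams were literally planar-isotopic at every vertex of the cube, mutation invariance would hold trivially over any ring, with no need for $\Ftwo$. The paper avoids this by first factoring $Kh(T\cup T')\cong Kh(T)\otimes Kh(T')$ (Theorem~\ref{thm:tensor}) and then applying the enhanced delooping functor $\mathsf{D}'$, so that every object of the inner complex becomes one of the two $R_z$-invariant crossingless tangles $O_0$, $O_1$. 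After this the two complexes $A$ and $B$ have \emph{identical} objects, and the entire discrepancy is concentrated in morphisms, where Lemma~\ref{lem:drotation1} identifies it exactly: $R_z(S)=S+(x_a+x_c)\dd S$.

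The second gap is the mechanism for cancelling this morphism-level error term. It is not, as you suggest, ``sliding dots through a half-twist of the outer circle''; it is conjugation by an explicit automorphism $\varphi=\varphi_1\circ\cdots\circ\varphi_m$ assembled from the dot migration homotopies $h_k$ associated to the crossings $c_1,\ldots,c_m$ of $T'$ along the arc $\alpha$ joining $a$ to $c$. Each $h_k$ is a chain homotopy between the neighbouring dot multiplications $X_k$ and $X_{k+1}$ (Lemma~\ref{lem:homotopy}), and conjugation by the full product telescopes $(X_1+X_2)+\cdots+(X_m+X_{m+1})$ down to $X_a+X_c$ (Corollary~\ref{cor:phimigration}), which is precisely the term $(x_a+x_c)\dd\delta\otimes 1$ needed to turn $d_A$ into $d_B$. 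This is where the crossed connectivity of $T'$ is genuinely used --- the dot has to migrate \emph{through} $T'$ from $a$ to $c$, which requires the arc $\alpha$. Your proposal never engages with the interior of $T'$ at all, and your fallback to producing merely a chain homotopy rather than an isomorphism signals that the construction has not actually been carried through; the paper's $\varphi$ is an honest automorphism of the underlying object satisfying $\varphi\circ d_A=d_B\circ\varphi$.
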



\section{Bar-Natan's formal Khovanov bracket}
\label{s:bracket}
In this section, we briefly review the definition of Bar-Natan's
formal Khovanov bracket. For more details, we refer the
reader to \cite{barnatan-2005-9}.

\subsection{Chain complexes and chain maps in pre-additive categories}
Let $\mathcal{C}$ be a pre-additive category.
To $\mathcal{C}$, one can associate an additive
category $\Mat(\mathcal{C})$, called
the \emph{matrix extension} or \emph{additive closure}
of $\mathcal{C}$ and defined as follows.
An object of $\Mat(\mathcal{C})$ is a finite
tuple $(O_1,\ldots,O_m)$ of objects $O_i\in\mathcal{C}$
(where $m$ can be any non-negative integer).
A morphism
$F\colon (O_1,\ldots,O_n)\rightarrow (O'_1,\ldots,O'_m)$
is a matrix $F=(F_{ij})$ of morphisms
$F_{ij}\in\operatorname{Hom}_{\mathcal{C}}(O_j,O'_i)$.
The composition of two morphisms $F=(F_{ik})$ and $G=(G_{kl})$
is modelled on ordinary
matrix multiplication: $(F\circ G)_{ij}:=\sum_kF_{ik}\circ G_{kj}$.
Direct sums are defined by concatenation:
$(O_1,\ldots,O_n)\oplus(O'_1,\ldots,O'_m):=(O_1,\ldots,O_n,O'_1,\ldots,O'_m)$.
By identifying an object $O\in\mathcal{C}$ with the $1$-tuple
$(O)\in\Mat(\mathcal{C})$,
one can embed $\mathcal{C}$ into $\Mat(\mathcal{C})$ as
a full subcategory. In particular, one can write every
object $(O_1,\ldots,O_m)\in\Mat(\mathcal{C})$ as
a direct sum $(O_1,\ldots,O_m)=\bigoplus_{i=1}^mO_i$.

\begin{definition}
A \emph{bounded chain complex} in $\mathcal{C}$
is a pair $C=(C^*,d^*)$, where $C^*=\{C^i\}_{i\in\Z}$
is a sequence
of objects $C^i\in\Mat(\mathcal{C})$,
such that $C^i=0$ for $|i|\gg 0$,
and $d^*=\{d^i\}_{i\in\Z}$
is sequence of morphisms $d^i \colon C^i\rightarrow C^{i+1}$
such that $d^{i+1}\circ d^i=0$ for all $i\in\Z$.
\end{definition}

\begin{definition}
A \emph{chain map} $F\colon(C_1^*,d_1^*)\rightarrow(C_2^*,d_2*)$
is a sequence of morphisms $F^i\colon C_1^i\rightarrow C_2^i$
such that $F^{i+1}\circ d_1^i=d_2^i\circ F^i$ for all $i\in\Z$.
\end{definition}

We denote by $\Kom(\mathcal{C})$ the category whose objects
are bounded chain complexes in $\mathcal{C}$ and
whose morphisms are chain maps.

\begin{remark}
If $\mathsf{F}\colon\mathcal{C}_1\rightarrow\mathcal{C}_2$
is an additive functor
between two pre-additive categories
$\mathcal{C}_1$ and $\mathcal{C}_2$, then $\mathsf{F}$
can be extended to an
additive functor $\mathsf{F}\colon\Mat(\mathcal{C}_1)\rightarrow\Mat(\mathcal{C}_2))$
by setting
$\mathsf{F}((O_1,\ldots,O_m)):=(\mathsf{F}(O_1),\ldots,\mathsf{F}(O_m))$
and $\mathsf{F}(F):=(\mathsf{F}(F_{ij}))$
for every object $(O_1,\ldots,O_m)\in\Mat(\mathcal{C}_1)$ and
every morphism $F=(F_{ij})$. Similarly, $\mathsf{F}$ can
be extended
to an additive functor
$\mathsf{F}\colon\Kom(\mathcal{C}_1)\rightarrow\Kom(\mathcal{C}_2)$ by setting
$\mathsf{F}((C^*,d^*))^i:=(\mathsf{F}(C^i),\mathsf{F}(d^i))$
and $\mathsf{F}(F^*)^i:=\mathsf{F}(F^i)$. In this paper,
we make no distinction between the notation for
the functor $\mathsf{F}\colon\mathcal{C}_1\rightarrow\mathcal{C}_2$
itself, and the notation for the extensions of $\mathsf{F}$.
\end{remark}

\subsection{Decorated cobordisms}\label{subs:decorated}
In the following, $U$ is the closure of
a domain in $\R^2$, and $P$ is a finite subset of $\partial U$.

Let $O_1,O_2\subset U$ be two properly embedded unoriented
compact $1$-submanifolds in $U$ with
$\partial O_1=\partial O_2=P$.
A \emph{cobordims} between $O_1$ and $O_2$
is a compact properly embedded unoriented surface
$S\subset U\times [0,1]$
whose bottom boundary is $O_1$ and whose top boundary
is $O_2$, and whose intersection with
$(\partial U)\times [0,1]$ consists of
the vertical segments $P\times [0,1]$.
A \emph{decorated cobordism} is a cobordism
decorated by
finitely many (possibly zero) distinct points
or \emph{dots}, which lie in the interior of $S$.
Let $DC(O_1,O_2)_{\bullet}$ be the
set of isotopy classes of
decorated cobordisms between $O_1$ and $O_2$.
Moreover, let $DC(O_1,O_2)_{\bullet/\ell}$ be
the quotient of the $\Ftwo$-vector space spanned
the elements of $DC(O_1,O_2)_{\bullet}$ modulo the
following \emph{local relations}, called respectively
the \emph{sphere relation}, the \emph{dot relation} and the
\emph{neck-cutting relation:}
\begin{figure}[!h]
\begin{equation*}
\begin{array}{l}
  \text{(S)}\quad
  \begin{array}{c}
    \includegraphics[height=1cm]{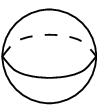}
  \end{array}\hspace{-2mm}\,\sqcup S\,=\,0,
  \qquad\qquad\;
  \text{(D)}\quad
  \begin{array}{c}
    \includegraphics[height=1cm]{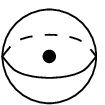}
  \end{array}\hspace{-2mm}\,\sqcup S\,=\,S,
\\
  \text{(N)}\quad
  \begin{array}{c}\includegraphics[height=10mm]{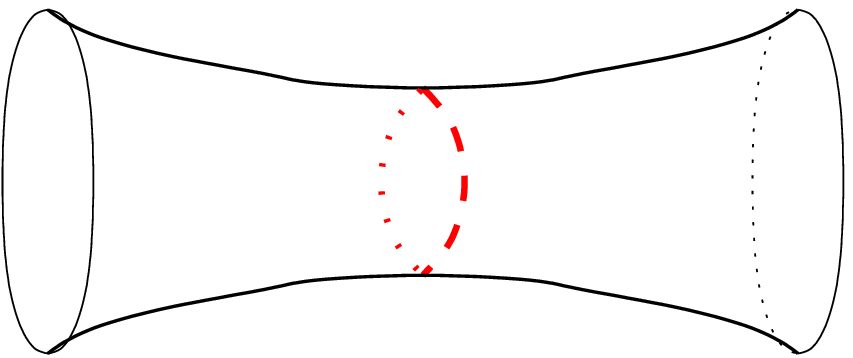}\end{array}
\,=\,\begin{array}{c}
\includegraphics[height=10mm]{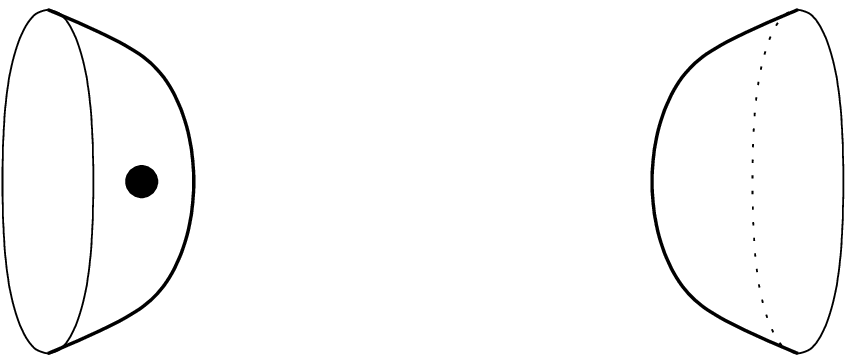}\end{array}\,
+\,\begin{array}{c}
\includegraphics[height=10mm]{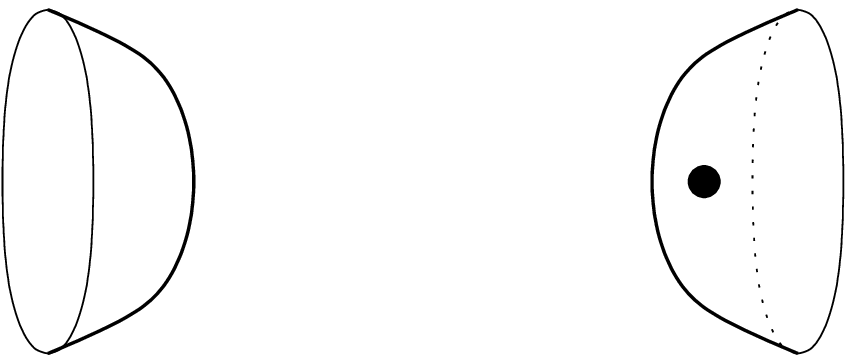}\end{array}.
\end{array}
\end{equation*}
\caption{\label{fig:relations} Local relations in $DC(O_1,O_2)_{\bullet/\ell}$.}
\end{figure}

In the first two relations, $S$ stands for an arbitrary decorated cobordism,
and in the third relation, the three pictures stand for three decorated
cobordisms, which are identical everywhere except in a small
ball $B^3\subset U\times [0,1]$ where the differ as shown.
Using the above relations, one can deduce the important
\emph{double dot relation:}
\begin{figure}[!h]
$$
  \text{(DD)}\quad
  \begin{array}{c}
    \includegraphics[height=1cm]{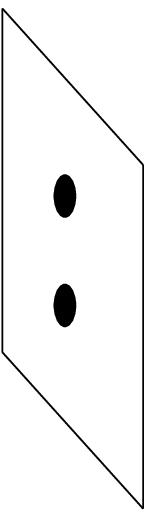}
  \end{array}\hspace{-2mm}\,=\,
  \begin{array}{c}
    \includegraphics[height=1cm]{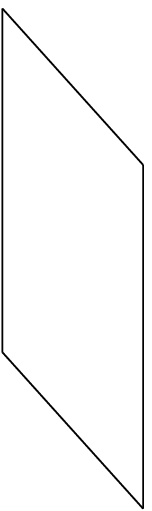}
  \end{array}\sqcup\,t
$$
\caption{\label{fig:ddrelation} The double dot relation.}
\end{figure}

In the (DD) relation, $t$ stands for a $2$-sphere decorated by exactly three
dots. Thus, the (DD) relation says that we can remove any pair
of dots lying on the same
component of a decorated cobordism, at
the expense of adding a $2$-sphere decorated by
exactly three dots. We can endow
$DS(O_1,O_2)_{\bullet/\ell}$ with the structure an $\Ftwo[t]$-module 
by defining $t^nS$ to be the disjoint union of $S$ with $n$
disjoint copies of $t$.

\begin{definition} Let $\Cobdl{U,P}$ be the pre-additve category
whose objects are unoriented
properly embedded compact $1$-manifolds
$O\subset U$ with $\partial O=P$, and whose morphism sets are the
$\Ftwo$-vector spaces
$DC(O_1,O_2)_{\bullet/\ell}$.
Composition of morphisms $S_1\colon O_1\rightarrow O_2$
and $S_2\colon O_2\rightarrow O_3$ is given by stacking
$S_2$ on top of $S_1$.
\end{definition}

Let $\Mat(U,P):=\Mat(\Cobdl{U,P})$ and
$\Kom(U,P):=\Kom(\Cobdl{U,P})$.

\subsection{Quantum grading}\label{subs:grading}
To incorporate the \emph{quantum grading} (or
\emph{$j$-grading}) of Khovanov homology,
one has to redefine the objects of $\Cobdl{U,P}$
as being pairs $(O,n)$ where $O\subset U$
is a properly embedded compact $1$-manifold with $\partial O=P$
as before, and $n$ is an integer. A
morphism $S\colon(O_1,n_1)\rightarrow(O_2,n_2)$ is
given by a morphism $S\colon O_1\rightarrow O_2$, i.e.
by an element $S\in DC(O_1,O_2)_{\bullet/\ell}$.
The \emph{quantum degree} of a morphism
is defined by:
$$\deg(S):=e(S)-2d(S)+n_2-n_1\,,$$
where $e(S):=\chi(S)-|P|/2$ is the Euler measure of $S$,
and $d(S)$ is the number of dots on $S$.
Let $\Cobdl{U,P}^0$ denote the category which
has the same objects as $\Cobdl{U,P}$, but whose morphisms
$S\colon (O_1,n_1)\rightarrow(O_2,n_2)$
are required to satisfy $\deg(S)=0$.
Let $\Mat(U,P)^0:=\Mat(\Cobdl{U,P}^0)$
and $\Kom(U,P)^0:=\Kom(\Cobdl{U,P}^0)$.
For each integer $m$, let
$\{m\}$ denote the degree shift functor given
by $(O,n)\{m\}:=(O,m+n)$. Identifying
$(O,0)$ with $O$, we will henceforth write $O\{n\}$ instead of $(O,n)$.

\subsection{Formal Khovanov bracket}\label{subs:bracket}
Now let $T\subset U$ be a tangle diagram with $\partial T=P$.
Let $\chi$ be the set of crossings of $T$
and $\{0,1\}^{\chi}$ the set
of all maps $\epsilon\colon\chi\rightarrow\{0,1\}$.
A crossing $c\in\chi$ (looking like: $\slashoverback$)
can be resolved in two possible
ways, $\smoothing$ and $\hsmoothing$,
called its \emph{$0$-resolution}
and its \emph{$1$-resolution}, respectively.
Given $\epsilon\in\{0,1\}^{\chi}$, denote by $T_{\epsilon}$
the crossingless tangle diagram obtained from
$T$ by replacing every $c\in\epsilon^{-1}(0)$
by its $0$-resolution, and every $c\in\epsilon^{-1}(1)$
by its $1$-resolution.
For $\epsilon,\epsilon'\in\{0,1\}^{\chi}$
and $c\in\chi$, we will write $\epsilon<_c\epsilon'$ iff
$\epsilon$ and $\epsilon'$ satisfy
$\epsilon(c)=0$ and $\epsilon'(c)=1$,
and $\epsilon(c')=\epsilon'(c')$ for all $c'\in\chi$ with $c'\neq c$.
For such $\epsilon,\epsilon'$,
there is a preferred cobordism
$S_{\epsilon'\epsilon}\colon T_{\epsilon}\rightarrow T_{\epsilon'}$
containing no dots, such that
$S_{\epsilon',\epsilon}\cap (\operatorname{Nbd}(c)\times [0,1])$
is a saddle cobordism between
$\smoothing$ and $\hsmoothing$, and
$S_{\epsilon',\epsilon}\setminus(\operatorname{Nbd}(c)\times [0,1])$
is the identity cobordism.
For $\epsilon,\epsilon'\in\{0,1\}^{\chi}$
and $c\in\chi$, let
$(d_c)_{\epsilon'\epsilon}\colon T_{\epsilon}\rightarrow T_{\epsilon'}$
be the morphism defined by
$(d_c)_{\epsilon'\epsilon}:=S_{\epsilon',\epsilon}$
if $\epsilon<_c\epsilon'$, and $(d_c)_{\epsilon'\epsilon}:=0$
otherwise. Let
$d_{\epsilon'\epsilon}:=\sum_{c\in\chi}(d_c)_{\epsilon'\epsilon}$
and $|\epsilon|:=|\epsilon^{-1}(1)|=\sum_{c\in\chi}\epsilon(c)$.
Suppose $T$ is oriented and let $n_+$ ($n_-$) be
the number of positive (negative) crossings in $T$.
If $\epsilon$ and $\epsilon'$ satisfy
$|\epsilon|=i+n_-$ and $|\epsilon'|=i+1+n_-$
for an $i\in\Z$,
then we set $d^i_{\epsilon'\epsilon}:=d_{\epsilon'\epsilon}$.

\begin{definition}\label{def:kh}
The \emph{formal Khovanov bracket} of $T$ is
the chain complex $Kh(T):=(Kh(T)^*,\\ d^*)\in
\Kom(U,P)^0$ defined by
$Kh(T)^i:=\bigoplus_{|\epsilon|=i+n_-} T_{\epsilon}\{i+n_+-2n_-\}$ and
$d^i:=(d^i_{\epsilon'\epsilon})$.
\end{definition}

Definition~\ref{def:kh} is justified by the following lemma:

\begin{lemma}\label{lem:dsquare}
$d^{i+1}\circ d^i=0$ for all $i\in\Z$.
\end{lemma}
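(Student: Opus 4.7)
The plan is to unpack the matrix definition of $d^i$, observe that the only contributions to $(d^{i+1}\circ d^i)_{\epsilon''\epsilon}$ come from pairs $(\epsilon,\epsilon'')$ differing at exactly two crossings, and then exploit the fact that the two resulting cobordisms $T_\epsilon\to T_{\epsilon''}$ are isotopic rel boundary, so that they cancel over $\Ftwo$.

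More concretely, first I would expand
\[
(d^{i+1}\circ d^i)_{\epsilon''\epsilon} \;=\; \sum_{\epsilon'}\; d^{i+1}_{\epsilon''\epsilon'}\circ d^i_{\epsilon'\epsilon} \;=\; \sum_{c_1,c_2\in\chi}\sum_{\epsilon'}\; (d_{c_2})_{\epsilon''\epsilon'}\circ (d_{c_1})_{\epsilon'\epsilon}.
\]
By the definitions, the summand for a fixed $(c_1,c_2,\epsilon')$ is zero unless $\epsilon<_{c_1}\epsilon'<_{c_2}\epsilon''$. In particular, $c_1\neq c_2$, and $\epsilon$ and $\epsilon''$ must agree outside $\{c_1,c_2\}$, with $\epsilon(c_1)=\epsilon(c_2)=0$ and $\epsilon''(c_1)=\epsilon''(c_2)=1$. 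Given such a pair $(\epsilon,\epsilon'')$, there are exactly two intermediate vertices of the cube: the $\epsilon'$ obtained by first flipping $c_1$ (then $c_2$), and the one obtained by first flipping $c_2$ (then $c_1$). So only pairs differing at exactly two crossings contribute, and each such pair contributes the sum of two compositions of saddle cobordisms.

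Next I would observe that each of these two compositions is obtained by stacking a saddle supported in a small neighborhood $\operatorname{Nbd}(c_1)\times[0,1]$ with a saddle supported in a small neighborhood $\operatorname{Nbd}(c_2)\times[0,1]$, and these two neighborhoods are disjoint. Consequently, one can perform the two saddles in either order to obtain cobordisms that are ambient isotopic rel boundary in $U\times[0,1]$. Thus the two cobordisms represent the same element of $DC(T_\epsilon,T_{\epsilon''})_{\bullet/\ell}$, and since we work over $\Ftwo$ their sum is zero.

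The step that requires any care is the isotopy between the two orderings of the saddles; it is essentially a consequence of the fact that one may perturb a composition of two $2$-handle attachments at disjoint loci to a product cobordism on the complement. Once this is in hand, each $(\epsilon,\epsilon'')$-contribution vanishes, so $(d^{i+1}\circ d^i)_{\epsilon''\epsilon}=0$ for all $\epsilon,\epsilon''$, proving the lemma.
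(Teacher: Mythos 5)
Your proof is correct and is essentially the same argument as the paper's, unpacked at the level of individual matrix entries. The paper organizes the same computation more compactly by writing $d=\sum_{c\in\chi}d_c$, showing $d_c\circ d_c=0$ (no two matrix entries of $d_c$ compose nontrivially) and $d_c\circ d_{c'}=d_{c'}\circ d_c$ for $c\neq c'$ (distant saddles time-reorder by isotopy), and then invoking $\Ftwo$ coefficients; your entry-by-entry analysis of the square faces $\epsilon<_{c_1}\epsilon'<_{c_2}\epsilon''$ and the isotopy of the two saddle orderings is precisely the content of those two identities.
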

\begin{proof}
Ignoring differentials and gradings for a moment,
we can identify $Kh(T)$ with the object
$Kh(T)=\bigoplus_{\epsilon\in\{0,1\}^{\chi}}T_{\epsilon}
\in\Mat(U,P)$. We can then identify the
differential in $Kh(T)$
with the endomorphism $d:=(d_{\epsilon'\epsilon})$
of $Kh(T)\in\Mat(U,P)$ (with
$d_{\epsilon'\epsilon}$ defined as above).
For $c\in\chi$, let $d_c$ be the endomorphism 
of $Kh(T)\in\Mat(U,P)$
defined by $d_c:=((d_c)_{\epsilon'\epsilon})$.
We have $d_c\circ d_c=0$
because for any three elements
$\epsilon,\epsilon',\epsilon''\in\{0,1\}^{\chi}$,
at least one of the two matrix entries
$(d_c)_{\epsilon''\epsilon'}$ and
$(d_c)_{\epsilon'\epsilon}$ is equal to zero.
We also have $d_c\circ d_{c'}=d_{c'}\circ d_c$
for all $c,c'\in\chi$ because distant saddles can
be time-reordered by isotopy. Since $d=\sum_{c\in\chi}d_c$,
this implies $d\circ d=0$, and thus the lemma follows.
\end{proof}

The following theorem was proved by Bar-Natan
\cite{barnatan-2005-9}.

\begin{theorem}\label{thm:tangleinvariant}
The graded homotopy type of $Kh(T)$ is a tangle invariant.
\end{theorem}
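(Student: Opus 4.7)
The plan is to reduce the claim to invariance under the three Reidemeister moves. Any two diagrams of the same tangle differ by a finite sequence of planar isotopies and Reidemeister moves R1, R2, R3 performed inside small disks, and invariance of $Kh(T)$ under planar isotopy is immediate from the purely combinatorial definition (the set of crossings, their $0/1$-resolutions, and the saddle cobordisms $S_{\epsilon'\epsilon}$ are all preserved up to canonical identification). It therefore suffices to exhibit a graded chain homotopy equivalence $Kh(T)\simeq Kh(T')$ in $\Kom(U,P)^0$ whenever $T$ and $T'$ differ by a single Reidemeister move.

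For each such move, I will argue locally: the complexes $Kh(T)$ and $Kh(T')$ differ only in a small subcomplex supported on the disk in which the move occurs, while everywhere else they agree term by term. It then suffices to produce an explicit chain homotopy equivalence between the two small local complexes in $\Kom(\mathcal{D},Q)^0$ for the appropriate finite boundary set $Q\subset\partial\mathcal{D}$. The maps $F$ and $G$ witnessing the equivalence, together with the chain homotopies showing $F\circ G\simeq\mathrm{id}$ and $G\circ F\simeq\mathrm{id}$, will be built from the elementary decorated cobordisms in $\Cobdl{\mathcal{D},Q}$: identity cylinders, cups, caps, saddles, and their dotted variants. All verifications are finite calculations inside $\Cobdl{\mathcal{D},Q}$ which use only the local relations (S), (D), (N) and the derived (DD) relation.

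Concretely, for R1 the local complex has two terms, one of whose resolutions carries a small circle. Neck-cutting this circle via (N) splits that term as a direct sum, one summand of which pairs with the adjacent term through a differential that is an identity-plus-dot and hence contractible; killing this acyclic summand leaves a complex isomorphic to $Kh$ of the kinkless diagram. For R2, the four-term local complex contains resolutions with a small circle, and analogous neck-cutting and (S) applications expose two contractible summands whose cancellation collapses the complex to the single-term complex of the simplified diagram. In both cases the chain homotopies are supported on explicit dotted cups, caps and saddles, and the required identities are verifiable by direct inspection.

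The main obstacle is R3. My plan is to deduce it from R2 by the standard categorified movie argument: on each side of the Reidemeister III configuration, one first applies R2 to a suitable pair of strands, producing intermediate complexes that are related to each other by a planar isotopy. To make this rigorous I would construct the R2 homotopies on both sides of the triangle as explicit dotted cobordisms, check that they commute with the differentials coming from crossings outside the R2 region, and then verify that the resulting comparison map between the two simplified complexes is an isomorphism of chain complexes. The delicate point is compatibility: the dotted homotopies used on the two sides must be chosen coherently enough that the comparison map respects the neck-cutting splittings simultaneously, and this bookkeeping, rather than any single relation, is where the argument is most intricate.
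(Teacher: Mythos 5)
The paper does not prove Theorem~\ref{thm:tangleinvariant}; it is quoted as a known result of Bar-Natan and cited to \cite{barnatan-2005-9}, so there is no ``paper's own proof'' to compare against. That said, your outline follows the same general route that Bar-Natan uses: reduce to planar isotopy plus R1, R2, R3, use that the complexes agree outside a small disk, and produce explicit homotopy equivalences in $\Kom(\mathcal{D},Q)^0$ built from cups, caps, saddles, dots, and the relations (S), (D), (N), (DD). Your treatment of R1 and R2 via delooping the small circle and Gaussian elimination is essentially the right picture.

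The genuine gap is in your R3 discussion. You describe it as ``deduce from R2 by a categorified movie argument'' and then concede that the compatibility bookkeeping is where it gets intricate, but you never identify what structural device makes the comparison go through. The missing idea is Bar-Natan's categorified Kauffman trick: one resolves a single crossing of the R3 tangle, expresses each side of R3 as the mapping cone of a morphism from (the complex of) the $0$-resolution to (the complex of) the $1$-resolution, applies the already-established R2 equivalence to one of the two pieces, and then observes that the two cones are cones over \emph{isomorphic} morphisms (this uses a separate small lemma on invariance of cones under composing with homotopy equivalences that respect the cone structure, often stated via ``strong deformation retracts''). Without isolating the cone lemma and the fact that the R2 retraction can be chosen compatibly with the remaining differential, ``checking that the homotopies commute with the outside differential'' is not a self-contained argument and is where a naive attempt typically fails. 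Your proposal is correct in spirit but incomplete: it sketches R1 and R2 adequately and leaves R3 at the level of an acknowledged to-do, which is precisely the hard part.
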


\subsection{Relation with Khovanov homology and Lee homology}
If $T$ is a link diagram (i.e. $\partial T=\emptyset$),
then the formal Khovanov bracket of $T$
refines both  the $\Ftwo$-coefficient
\emph{Khovanov homology}~\cite{MR1740682} 
and the $\Ftwo$-coefficient
\emph{Lee homology}~\cite{lee-2002} of $T$.
Indeed, let $\operatorname{Hom}(\emptyset,-)$
be the functor which maps an object $O\in\Cobdl{U,\emptyset}$
to the graded morphism set
$\operatorname{Hom}(\emptyset,O)$,
regarded as a graded $\Ftwo[t]$-module via the (DD) relation.
Then the $\Ftwo$-coefficient
Khovanov homology of $T$ is the homology of the
chain complex $\mathsf{F}_{Kh}(Kh(T))$, where
$\mathsf{F}_{Kh}(-):=\operatorname{Hom}(\emptyset,-)\otimes_{t=0}\Ftwo$,
and the $\Ftwo$-coefficient
Lee homology of $T$ is the homology of the chain complex
of $\mathsf{F}_{Lee}(Kh(T))$, where
$\mathsf{F}_{Lee}(-):=\operatorname{Hom}(\emptyset,-)
\otimes_{t=1}\Ftwo$.

\subsection{Tensor products}
In this subsection, we describe a special case of
the `categorified planar algebra' structure of $Kh(T)$
that was introduced in \cite[Section~5]{barnatan-2005-9}.
Assume that we have the following situation:
\begin{itemize}
\item $U'$ and $U''$ are the closures of two disjoint domains in $\R^2$
and $U:=U'\cup U''$
\item $P_1$ and $P_2$ are finite subsets of
$(\partial U')\setminus U''$ and $(\partial U'')\setminus U'$, respectively.
\item $P_0$ is a finite subset of $U'\cap U''$.
\item $P':=P_0\cup P_1$ and $P'':=P_0\cup P_2$ and $P:=P_1\cup P_2$.
\end{itemize}
In this situation, there is a natural functor
$$
\Cobdl{U',P'}\times\Cobdl{U'',P''}\longrightarrow\Cobdl{U,P}
$$
which takes a pair of objects $(O',O'')$ (or morphisms $(S',S'')$)
to the union $O'\cup O''$ (or $S'\cup S''$). We write this
functor as a tensor product, and we extend it to a functor
$\Mat(U',P')\times\Mat(U'',P'')\rightarrow\Mat(U,P)$ by declaring
that the tensor product distributes over direct sums,
i.e.
$(O'_1\oplus O'_2)\otimes(O''_1\oplus O''_2):=
(O'_1\otimes O''_2)\oplus (O'_1\otimes O''_2)\oplus
(O'_2\otimes O''_2)\oplus (O'_2\otimes O''_2)$
and $(F'\otimes F'')_{i\otimes k,j\otimes l}=F'_{ij}\otimes F''_{kl}$.
Given two chain complexes $C'\in\Kom(U',P')$
and $C''\in\Kom(U'',P'')$, we define
$C'\otimes C''\in\Kom(U,P)$ to be the
chain complex whose underlying object is the tensor product
$C'\otimes C''\in\Mat(U,P)$,
and whose differential
is the endomorphism (in $\Mat(U,P)$) given by
$$
d_{C'\otimes C''}:=d_{C'}\otimes 1_{C'} + 1_{C''}\otimes d_{C''}
$$ 
where $d_{C'}$, $d_{C''}$, $1_{C'}$, $1_{C''}$ are
the differentials and the identity morphisms of $C'$ and $C''$,
respectively. As for the gradings, it is understood that both
the homological grading and the quantum grading are
additive under tensor products. The following theorem
was shown (in greater generality) in
\cite[Section~5]{barnatan-2005-9}.

\begin{theorem}\label{thm:tensor}
Let $T'\subset U'$ and $T''\subset U''$ be tangle diagrams
with $\partial T'=P'$ and $\partial T''=P''$.
Then $Kh(T'\cup T'')$ is canonically isomorphic
to $Kh(T')\otimes Kh(T'')$.
\end{theorem}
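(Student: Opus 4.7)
The plan is to check that both sides of the claimed isomorphism arise from exactly the same cube of resolutions, vertex by vertex and edge by edge, the only nontrivial point being the bookkeeping of the gradings.

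First I would decompose the crossing set. Since $T'$ and $T''$ lie in the disjoint domains $U'$ and $U''$, the set of crossings of $T=T'\cup T''$ decomposes as a disjoint union $\chi=\chi'\sqcup\chi''$ of the crossings of $T'$ and $T''$. Consequently $\{0,1\}^{\chi}$ is in canonical bijection with $\{0,1\}^{\chi'}\times\{0,1\}^{\chi''}$ via $\epsilon\mapsto(\epsilon',\epsilon'')$ where $\epsilon'=\epsilon|_{\chi'}$ and $\epsilon''=\epsilon|_{\chi''}$. At each vertex of the cube, the smoothing $T_\epsilon$ is, tautologically, the disjoint union $T'_{\epsilon'}\cup T''_{\epsilon''}$, which by the very definition of $\otimes$ in Section~3.6 equals $T'_{\epsilon'}\otimes T''_{\epsilon''}$.

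Next I would verify that the homological and quantum shifts are additive. Set $n_{\pm}=n'_{\pm}+n''_{\pm}$ (this holds because every crossing is a crossing of exactly one of $T'$, $T''$). For a vertex $(\epsilon',\epsilon'')$ we have $|\epsilon|=|\epsilon'|+|\epsilon''|$, so writing $|\epsilon'|=i'+n'_{-}$ and $|\epsilon''|=i''+n''_{-}$ gives $|\epsilon|=(i'+i'')+n_{-}$. Hence the bijection above respects the homological grading. For the quantum grading, a vertex $T'_{\epsilon'}\{i'+n'_{+}-2n'_{-}\}\otimes T''_{\epsilon''}\{i''+n''_{+}-2n''_{-}\}$ has quantum shift equal to $(i'+i'')+n_{+}-2n_{-}$, which is exactly the shift applied to $T_\epsilon$ in Definition~\ref{def:kh} at homological degree $i=i'+i''$. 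Thus as bigraded objects of $\Mat(U,P)$ the two sides agree summand by summand.

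Finally I would match the differentials. For each crossing $c\in\chi$, the preferred cobordism $S_{\epsilon'\epsilon}$ is the identity cobordism away from a neighbourhood of $c$. If $c\in\chi'$, this neighbourhood sits inside $U'\times[0,1]$, so the cobordism is of the form $(\text{saddle in }U')\cup(\text{identity in }U'')$; under the identification with the tensor product this is precisely a morphism $d'_{c}\otimes 1$. Similarly, if $c\in\chi''$, the cobordism equals $1\otimes d''_{c}$. Summing over $c\in\chi$ and using $d_{c}\circ d_{c'}=d_{c'}\circ d_{c}$ for distant saddles (Lemma~\ref{lem:dsquare}), we obtain
\[
d_{Kh(T)}=\sum_{c\in\chi'}d_{c}\otimes 1\;+\;\sum_{c\in\chi''}1\otimes d_{c}=d_{Kh(T')}\otimes 1+1\otimes d_{Kh(T'')},
\]
which is exactly $d_{Kh(T')\otimes Kh(T'')}$.

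The step requiring the most care is really just the grading bookkeeping; the identification of vertices, smoothings, and saddle cobordisms is essentially by definition, and because we work over $\Ftwo$ there are no sign conventions to reconcile in the tensor product differential. The resulting isomorphism is canonical because it is induced by the identity on each $T_{\epsilon}=T'_{\epsilon'}\cup T''_{\epsilon''}$.
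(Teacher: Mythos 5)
Your proof is correct, and it is the standard direct verification one would write out. Note that the paper does not actually supply a proof of this theorem — it cites \cite[Section~5]{barnatan-2005-9}, where Bar-Natan establishes the statement in the greater generality of a planar-algebra composition operation. Your argument is a specialization of that argument to the case of a disjoint-union planar arc diagram, and it correctly identifies the three things that need checking: the product decomposition of the cube of resolutions via $\{0,1\}^{\chi}\cong\{0,1\}^{\chi'}\times\{0,1\}^{\chi''}$, additivity of the homological and quantum shifts, and the identification of the per-crossing edge maps as $d'_c\otimes 1$ or $1\otimes d''_c$ according to whether $c\in\chi'$ or $c\in\chi''$. Your closing remark that $\Ftwo$ coefficients spare you the Koszul signs is the right thing to flag: over $\Z$ the naive identification of the cube would not be a chain isomorphism onto the usual tensor-product complex without introducing signs, and this is precisely the kind of bookkeeping that disappears in the setting of this paper.
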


\subsection{Delooping}\label{subs:delooping}
Let `$\bigcirc$' denote
the connected $1$-manifold consisting
of a single circle. More generally, let `$\bigcirc^n $'
denote the $1$-manifold consisting of $n$ disjoint circles,
and let
$\emptyset\{1\}$ and $\emptyset\{-1\}$ denote
degree-shifted copies of the empty $1$-manifold.
The following lemma is well-known (see e.g.
\cite[Lemma~4.1]{barnatan-2006}).

\begin{lemma}\label{lem:delooping}
The objects $\bigcirc$ and $\emptyset\{1\}\oplus\emptyset\{-1\}$
are isomorphic in $\Mat(U,\emptyset)^0$.
\end{lemma}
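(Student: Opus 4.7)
The plan is to prove the lemma by exhibiting explicit mutually inverse morphisms and checking the two compositions using the local relations in $\Cobdl{U,\emptyset}$. For $f\colon\bigcirc\to\emptyset\{1\}\oplus\emptyset\{-1\}$ I would take the column whose two components $f_1,f_2$ are caps (disks viewed as cobordisms $\bigcirc\to\emptyset$), with $f_1$ decorated by a single dot and $f_2$ carrying no dot. Dually, for $g\colon\emptyset\{1\}\oplus\emptyset\{-1\}\to\bigcirc$ I would take the row whose components $g_1,g_2$ are cups, with $g_1$ undotted and $g_2$ decorated by a single dot. A quick check using $\deg(S)=e(S)-2d(S)+n_2-n_1$ with $e(\text{disk})=\chi(\text{disk})=1$ shows that each of $f_1,f_2,g_1,g_2$ has quantum degree zero, so $f$ and $g$ are genuine morphisms in $\Mat(U,\emptyset)^0$.

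Next I would compute $f\circ g$ as a $2\times 2$ matrix whose $(i,j)$-entry is $f_i\circ g_j$. Stacking the cup $g_j$ under the cap $f_i$ (with an empty intermediate level) yields a single $2$-sphere carrying $d(f_i)+d(g_j)$ dots. The $(2,1)$-entry is an undotted sphere, which vanishes by (S); each diagonal entry is a sphere with exactly one dot, which equals the identity morphism of the empty $1$-manifold by (D); and the $(1,2)$-entry, a sphere with two dots, reduces under (DD) to $t$ times an undotted sphere and hence vanishes by (S). Therefore $f\circ g$ is the identity morphism on $\emptyset\{1\}\oplus\emptyset\{-1\}$.

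Finally I would compute $g\circ f=g_1\circ f_1+g_2\circ f_2\colon\bigcirc\to\bigcirc$. Here each composition stacks a cap below a cup, producing a pair of disjoint disks — the lower one bounded by the bottom circle of $\bigcirc$, the upper one by the top circle — with the dots distributed so that in each summand exactly one of the two disks is decorated. This is precisely the right-hand side of the neck-cutting relation (N) applied to the identity cylinder $\bigcirc\times[0,1]$, so $g\circ f=\mathrm{id}_{\bigcirc}$. The only subtle point in the whole argument is the vanishing of the two-dotted sphere in the off-diagonal entry of $f\circ g$: without the (DD) relation (together with the $\Ftwo$-linear structure), $f$ and $g$ would fail to be mutually inverse and the delooping would break down; here the combination (DD)+(S) kills the offending two-dotted sphere and the argument closes.
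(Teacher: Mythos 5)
Your proof is correct and is essentially the paper's proof: you construct exactly the same pair of mutually inverse morphisms (your $f,g$ are the paper's $G,H$, with the same placement of dots on the caps and cups), and you verify the two compositions are identities using (S), (D), (DD) and (N). The paper's proof merely defines these maps and declares the verification routine; you have simply written out that verification.
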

\begin{proof}
Let $V:=\emptyset\{1\}\oplus\emptyset\{-1\}$, and let
$G\colon\bigcirc\rightarrow V$ and
$H\colon V\rightarrow\bigcirc$ be the morphisms given by
the matrices
$(G_{11},G_{21})^t$ and $(H_{11},H_{12})$,
where $G_{11},G_{21},H_{11},H_{12}$
are cobordisms homeomorphic
to disks, with $G_{21}$ and $H_{11}$ containing no dots,
and $G_{11}$ and $H_{12}$ containing a single
dot each.
Using the local relations shown in Figure~\ref{fig:relations},
one can easily check that $G\circ H$ and $H\circ G$ are
the identity morphism of $V$ and $\bigcirc$, respectively.
\end{proof}

Let $\mathcal{C}\subset \Cobdl{U,P}$ be
the full subcategory
containing of all objects of the form $O\{n\}$,
where $O$ is a $1$-manifold without closed
components, and $n\in\Z$ is an arbitrary integer
(in fact, we will henceforth
drop the $\{n\}$ from the notation).
Note that
every object $O\in\Cobdl{U,P}$ can be written in the form
$O=O'\otimes\bigcirc^n$, where $O'\in\mathcal{C}$
and $n\geq 0$, and the tensor product `$\otimes$' denotes
a disjoint union. (This notation is consistent with the
one used in the previous subsection for $P_0=\emptyset$).
By applying the isomorphism
$G\colon\bigcirc\rightarrow V$ defined in the proof of
Lemma~\ref{lem:delooping} repeatedly
to each circle in $O=O'\otimes\bigcirc^n$,
we can define a functor which sends the object
$O\in\Mat(U,P)$ to an isomorphic object in $\Mat(\mathcal{C})$.
Formally, this functor is defined as follows.

\begin{definition}\label{def:delooping}
The \emph{delooping functor}
$\mathsf{D}\colon\Mat(U,P)\rightarrow\Mat(\mathcal{C})$
sends an object $O=O'\otimes\bigcirc^n$ (with
$O'\in\mathcal{C}$) to the object $\mathsf{D}(O):=O'\otimes V^{\otimes n}$,
and a morphism $S\colon O'_1\otimes\bigcirc^{n_1}
\rightarrow O'_2\otimes\bigcirc^{n_2}$ to the
morphism $\mathsf{D}(S):=(1\otimes G^{\otimes n_2})\circ S\circ (1\otimes H^{\otimes n_1})$
where $V$ and $G,H$ are as in the proof of Lemma~\ref{lem:delooping},
and $1$ stands for the identity morphism of either $O'_1$ or $O'_2$.
\end{definition}


\section{Operations involving dots}\label{s:tools}
In this section, we define algebraic
operations for manipulating
the dots that decorate a decorated cobordism.

\subsection{Dot multiplication}\label{subs:dmultiplication}
Let $U$ be the closure of a domain in $\R^2$ and
$P$ be a finite subset of $\partial U$.
Let $O\subset U$ be an object of the pre-additive category
$\Cobdl{U,P}$ defined in Subsection~\ref{subs:decorated},
and let $p\in O$ be an arbitrary point on $O$.

\begin{definition}\label{def:dmultiplication}
The \emph{dot multiplication map} is the endomorphism
$X_p\colon O\rightarrow O$
given by the cobordism $O\times [0,1]$, decorated
by a single dot lying in the interior of the
segment $\{p\}\times [0,1]\subset O\times[0,1]$.
If $p$ is a point of $\partial O=P$, then we move the
dot slightly into the interior of $O\times [0,1]$,
so that the result is a decorated cobordism in
the sense of Subsection~\ref{subs:decorated}.
\end{definition}

If $O_1,O_2\subset U$ are two objects of $\Cobdl{U,P}$
containing a point $p\in O_1\cap O_2$, and $S\colon O_1\rightarrow O_2$
is a decorated cobordism commuting with $X_p$,
then we define
$$x_pS:=X_p\circ S=S\circ X_p\,.$$
The above definitions extend to
$\Mat(U,P)$ as follows. Let $O=(O_1,\ldots,O_m)$
be an object in $\Mat(U,P)$ and $p\in\bigcap O_i$.
Then the dot multiplication map $X_p\colon O\rightarrow O$
is the endomorphism
whose off-diagonal entries are zero and whose diagonal
entry $(X_p)_{ii}$ is the decorated cobordism $x_p(O_i\times [0,1])$.
Similarly, if $F\colon O\rightarrow O'$ is a morphism commuting
with $X_p\colon O\rightarrow O'$ for a point
$p\in\bigcap O_i\cap\bigcap O'_j$, then we define
$x_pF:=X_p\circ F=F\circ X_p$.

\begin{definition} The \emph{endpoint ring} $\Ftwo[P]$ is the
commutative
polynomial ring with coefficients in $\Ftwo$ in formal variables
$x_p$, one for each $p\in P$.
\end{definition}

Since every morphism in $\Cobdl{U,P}$
contains the segment $\{p\}\times[0,1]$ and hence commutes with
$X_p$ for all $p\in P$, the endpoint ring
$\Ftwo[P]$ acts on morphism sets
of $\Cobdl{U,P}$ (or $\Mat{U,P}$)
by $x_p\cdot S:=x_pS=X_p\circ S=S\circ X_p$.

\subsection{Dot derivation}
Let $O_1,O_2\subset U$ be two compact embedded
$1$-manifolds with $\partial O_1=\partial O_2=P$,
and let $S\in DC(O_1,O_2)_{\bullet}$ be a
decorated cobordism containing $m\geq 0$ dots.

\begin{definition}\label{def:dderivative}
The \emph{derivative of $S$ with respect to the dot} is the sum
$$
\dd S := S_1 +\ldots + S_m\,\in\, DC(O_1,O_2)_{\bullet/\ell}\,,
$$
where $S_i$ is the decorated cobordism obtained from $S$ by
removing the $i$th dot.
\end{definition}

\begin{lemma}\label{lem:dd}
The map
$\dd\colon S\mapsto \dd S$ descends to a linear
endomorphism of $DC(O_1,O_2)_{\bullet/\ell}$.
\end{lemma}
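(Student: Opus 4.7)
The plan is to verify well-definedness in three stages: first, that the prescription $S \mapsto \dd S$ descends from decorated cobordisms to isotopy classes; second, that its linear extension to the $\Ftwo$-vector space spanned by $DC(O_1,O_2)_\bullet$ is unambiguous; and third, that this extension preserves the subspace generated by the relations (S), (D), (N), so that it further descends to $DC(O_1,O_2)_{\bullet/\ell}$. The first two stages are formal: any ambient isotopy of $S$ carries its $m$ dots bijectively to the $m$ dots of its image, yielding isotopies between the corresponding $S_i$'s and hence equal sums in the vector space on isotopy classes.

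For (S), differentiating $\text{sphere}\sqcup S$ (with $m$ dots all lying on $S$) produces a sum of terms $\text{sphere}\sqcup S_i$, each of which vanishes by (S). For (D), differentiating $\text{dotted sphere}\sqcup S$ produces one term where the dot on the sphere is removed---this equals $\text{sphere}\sqcup S$ and hence vanishes by (S)---together with $m$ terms of the form $\text{dotted sphere}\sqcup S_i \equiv S_i$ by (D), which sum to $\dd S$, matching the derivative of the right-hand side of (D). Both checks are routine.

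The hard step will be (N). Denote the three cobordisms in the relation by $N$, $L$, $R$, and suppose they share $m$ dots outside the ball $B^3$ in which they differ; then $L$ and $R$ additionally carry one dot each inside $B^3$ (on the lower and upper cup, respectively). I would compute
\[
\dd L + \dd R \;=\; (L_0 + R_0) \;+\; \sum_{i=1}^{m} (L_i + R_i),
\]
where the subscript $0$ indicates removal of the inside dot and $i \ge 1$ indicates removal of the $i$-th outside dot. The crucial observation is that $L_0$ and $R_0$ are literally the same decorated cobordism---a cut neck with no dot inside $B^3$ and identical outside decoration---so $L_0 + R_0 = 0$ over $\Ftwo$. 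This cancellation is precisely what the characteristic-$2$ hypothesis supplies and is the main obstacle to a characteristic-free argument. Each remaining pair $L_i + R_i$ equals $N_i$ by (N) applied to the cobordism with the $i$-th outside dot erased, and summing yields $\dd N$, as desired.
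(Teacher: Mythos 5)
Your proof is correct and takes essentially the same approach as the paper: verify compatibility with each of the local relations (S), (D), (N) in turn, with the key step being that the two neck-cutting terms become identical after erasing the inside dot and cancel over $\Ftwo$. The paper's version is terser (it phrases the check locally, leaving the Leibniz bookkeeping over the $m$ outside dots implicit), whereas you spell that bookkeeping out explicitly, but the underlying argument is the same.
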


\begin{proof}
We have to check that $\dd$ is compatible with the local
relation shown in Figure~\ref{fig:relations}. Applying
$\dd$ to the two sides of the (S) relation
yields zero on both sides, and so there is nothing
to prove in this case. Applying $\dd$ to the (D) relation
yields zero
on the right-hand side and an undecorated sphere on the left-hand
side. But an undecorated sphere is equivalent
to zero by the (S) relation, whence $\dd$ is also compatible with
the (D) relation. Compatibility with the (N) relation follows because $\dd$
applied to the left-hand side of (N) gives zero, and $\dd$ applied
to the right-hand side of (N) yields a sum of two identical term,
which is zero because we are working with $\Ftwo$ coefficients.
\end{proof}

The above lemma implies that $\dd$ acts on the morphism
sets of $\Cobdl{U,P}$, and the following lemma says that
$\dd$ satisfies Leibniz' rule with respect to composition
of morphisms.

\begin{lemma}\label{lem:ddproperties}
We have $\dd (S\circ S') = (\dd S)\circ S'+S\circ\dd S'$.
\end{lemma}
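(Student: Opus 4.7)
The plan is to reduce the Leibniz identity to a purely combinatorial statement about dot sets: stacking $S$ on top of $S'$ concatenates their decorating dots disjointly, so differentiating with respect to the dot distributes over composition in the obvious way.

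First I would verify the identity on individual decorated cobordism representatives, before passing to the quotient. Let $S \colon O_2 \to O_3$ carry dots $p_1,\ldots,p_m$ in its interior, and let $S' \colon O_1 \to O_2$ carry dots $q_1,\ldots,q_n$ in its interior. Because the composition $S\circ S'$ is formed by stacking $S$ on top of $S'$ along $O_2$, and because all dots lie in the interiors of the respective surfaces, the dot set of $S\circ S'$ is the disjoint union $\{p_1,\ldots,p_m,q_1,\ldots,q_n\}$ with $m+n$ elements. Applying Definition~\ref{def:dderivative}, $\dd(S\circ S')$ is the sum over these $m+n$ dots of the cobordism obtained by erasing that single dot. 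Partitioning the sum according to whether the erased dot originates from $S$ or from $S'$ immediately gives
$$\dd(S\circ S')\;=\;\sum_{i=1}^m S_i\circ S'\;+\;\sum_{j=1}^n S\circ S'_j\;=\;(\dd S)\circ S'\;+\;S\circ \dd S',$$
where $S_i$ and $S'_j$ denote $S$ and $S'$ with the indicated dot removed.

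To conclude for arbitrary morphisms in $\Cobdl{U,P}$, I would extend $\Ftwo$-bilinearly in $(S,S')$. Both sides of the claimed equation are $\Ftwo$-bilinear, composition in $\Cobdl{U,P}$ respects the local relations by construction, and Lemma~\ref{lem:dd} already ensures that $\dd$ is a well-defined linear operator on $DC(-,-)_{\bullet/\ell}$, so the identity descends from representatives to equivalence classes.

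There is essentially no obstacle in this argument; it is a bookkeeping check once one observes that dots are preserved intact under stacking. The only subtle point is ensuring that the dot set of $S\circ S'$ is genuinely a disjoint union, i.e.\ no dot ends up on the gluing locus $O_2\times\{1/2\}$. This is automatic since dots are taken to lie in the interior of a decorated cobordism, so one may always isotope representatives to keep dots bounded away from the seam before composing.
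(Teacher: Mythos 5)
Your argument is correct and is exactly the content behind the paper's one-line proof (``Obvious from the definition of $\dd$''): composition stacks the two dot sets disjointly, so $\dd$ distributes, and well-definedness on the quotient is already Lemma~\ref{lem:dd}. No meaningful difference in approach.
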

\begin{proof} Obvious from the definition of $\dd$.
\end{proof}

\begin{corollary}\label{cor:ddcommute}
If $S$ satisfies $S\circ S=0$,
then $S$ commutes with $\dd S$.
\end{corollary}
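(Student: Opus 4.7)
The plan is simply to apply the Leibniz rule from Lemma~\ref{lem:ddproperties} to the relation $S\circ S=0$. Differentiating both sides with respect to the dot gives
$$0 \;=\; \dd(S\circ S) \;=\; (\dd S)\circ S \,+\, S\circ(\dd S),$$
so that $(\dd S)\circ S = -\,S\circ(\dd S)$. Since we are working over $\Ftwo$, the minus sign disappears and we obtain $(\dd S)\circ S = S\circ(\dd S)$, which is precisely the statement that $S$ commutes with $\dd S$.

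There is essentially no obstacle: the $\Ftwo$ hypothesis is exactly what turns the anticommutation relation forced by Leibniz into a commutation relation, which is one of the recurring reasons the whole argument of the paper works over $\Ftwo$ and not over $\Z$. The only thing worth noting is that the formula should be interpreted in the additive closure $\Mat(U,P)$ (so that $S$ is allowed to be a matrix of decorated cobordisms and $\dd$ acts entrywise), but this requires no extra argument since both Lemma~\ref{lem:dd} and Lemma~\ref{lem:ddproperties} extend entrywise to morphisms in $\Mat(U,P)$.
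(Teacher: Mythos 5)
Your proof is correct and is essentially identical to the paper's: both apply the Leibniz rule from Lemma~\ref{lem:ddproperties} to $S\circ S=0$ and use $\Ftwo$ coefficients to turn the resulting anticommutation into commutation. The paper phrases it as computing the commutator $[S,\dd S]=\dd(S\circ S)$, but this is just a rearrangement of the same calculation.
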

\begin{proof}
Since coefficients are in $\Ftwo$ and since
$\dd$ satisfies Leibniz' rule by Lemma~\ref{lem:ddproperties},
we can write the commutator of $S$ with $\dd S$ as
$
[S,\dd S] = S\circ\dd S + (\dd S)\circ S
= \dd (S\circ S)
$, and
thus the corollary follows.
\end{proof}

We extend $\dd$ to morphisms of $\Mat(U,P)$ (or $\Kom(U,P)$)
by setting $\dd(F_{ij}):=(\dd F_{ij})$. It is easy to see that
Lemma~\ref{lem:ddproperties} and Corollary~\ref{cor:ddcommute}
remain true for this extended version of $\dd$.

\begin{remark}
Note that $\dd$ raises the quantum degree by $2$
and satisfies $\dd\circ\dd=0$ (again we are using
that coefficients are in $\Ftwo$). Therefore,
the subcategory $\Cobdl{U,P}^{ev}\subset\Cobdl{U,P}$
which has the same objects as $\Cobdl{U,P}$ but
whose morphisms are required to have even quantum
degree (i.e. $\operatorname{deg}(S)\in 2\Z$)
becomes a differential graded category when
equipped with the derivation $\dd$.
\end{remark}

\subsection{Dot rotation}\label{subs:drotation}
In this subsection, we assume that $U=\mathcal{D}$
is the closed unit disk in $\R^2$ and $P\subset\partial U$
is the set $P=\{a,b,c,d\}$ defined in Section~\ref{s:mutation}.
As in Section~\ref{s:mutation}, we denote by $R_z$ the
self map of $\mathcal{D}\times[0,1]\subset\R^3$
given by $180^{\circ}$ rotation around the $z$-axis.
Since $R_z(P)=P$, the rotation $R_z$ acts on
objects and morphisms of $\Cobdl{\mathcal{D},P}$
by sending an object $O\subset\mathcal{D}$
to the rotated object $R_z(O)$, and a morphism
$S\subset\mathcal{D}\times [0,1]$ to
the rotated morphism $R_z(S)$. Since this action is compatible
with the composition of morphisms, it defines a functor
$$
R_z\colon\Cobdl{\mathcal{D},P}\longrightarrow\Cobdl{\mathcal{D},P}
$$
The goal of this subsection is to re-express
this functor in terms of the algebraic operations introduced
in the previous two subsections. To do this, we
first define
$$
r_z\colon\Ftwo[P]\longrightarrow\Ftwo[P]
$$
to be the ring automorphism
induced by mapping $x_p\in \Ftwo[P]:=\Ftwo[x_a,x_b,x_c,x_d]$
to  $r_z(x_p):=x_{R_z(p)}\in\Ftwo[P]$
for all $p\in P$. Explicitly, $r_z$ exchanges $x_a$ with $x_c$
and $x_b$ with $x_d$.
The following lemma is obvious.

\begin{lemma}\label{lem:drotation}
$R_z(f S)=r_z(f) R_z(S)$ for
every morphism $S$ in $\mathcal{C}$ and every $f\in\Ftwo[P]$.
\end{lemma}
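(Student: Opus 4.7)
The plan is to reduce the claim to the single-variable case $f = x_p$ and then extend by $\Ftwo$-linearity and multiplicativity, since $\Ftwo[P]$ is generated as a ring by the variables $x_a, x_b, x_c, x_d$.

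The key observation is that the rotation $R_z$ is a self-homeomorphism of $\mathcal{D} \times [0,1]$ whose restriction to $(\partial\mathcal{D}) \times [0,1]$ permutes the vertical segments $\{p\} \times [0,1]$ for $p \in P$ according to the rule $\{p\} \times [0,1] \mapsto \{R_z(p)\} \times [0,1]$. Consequently, if $X_p \colon O \to O$ is the dot multiplication cobordism $O \times [0,1]$ with a single dot placed just inside $\{p\} \times [0,1]$, then $R_z(X_p)$ is the cobordism $R_z(O) \times [0,1]$ with a single dot placed just inside $\{R_z(p)\} \times [0,1]$, i.e.\ $R_z(X_p) = X_{R_z(p)}$ as a morphism $R_z(O) \to R_z(O)$ (the small perturbation moving the dot into the interior is immaterial up to isotopy).

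Given this, I would carry out three short steps. First, for a generator $x_p$, the functoriality of $R_z$ with respect to composition gives
\[
R_z(x_p S) \;=\; R_z(X_p \circ S) \;=\; R_z(X_p) \circ R_z(S) \;=\; X_{R_z(p)} \circ R_z(S) \;=\; x_{R_z(p)}\, R_z(S) \;=\; r_z(x_p)\, R_z(S).
\]
Second, if the identity holds for $f$ and $g$, then since dot multiplications commute with any morphism in $\Cobdl{\mathcal{D},P}$, we get
\[
R_z((fg) S) \;=\; R_z(f \cdot (gS)) \;=\; r_z(f)\, R_z(gS) \;=\; r_z(f)\, r_z(g)\, R_z(S) \;=\; r_z(fg)\, R_z(S),
\]
so the identity extends from generators to all monomials. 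Third, $R_z$ is $\Ftwo$-linear on morphism sets (it acts by a homeomorphism, and this action descends to the $\Ftwo$-vector space $DC(O_1,O_2)_{\bullet/\ell}$ because the local relations in Figure~\ref{fig:relations} are $R_z$-invariant), as is the map $f \mapsto r_z(f)$. Hence linear extension over $\Ftwo$ completes the proof.

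The main—and really only—point that needs care is the base case $R_z(X_p) = X_{R_z(p)}$: one must verify that after the small perturbation of the dot off $\partial(O \times [0,1])$ required by Definition~\ref{def:dmultiplication}, the rotated cobordism is isotopic (rel boundary) to the standard representative of $X_{R_z(p)}$. This is a routine isotopy argument since any two dots sitting in a small neighborhood of $\{R_z(p)\} \times [0,1]$ on $R_z(O) \times [0,1]$ can be connected by an ambient isotopy fixing the boundary.
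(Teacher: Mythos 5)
The paper does not actually supply a proof of this lemma; it simply declares it ``obvious.'' Your argument is a correct and complete elaboration of that obvious fact: the base case $R_z(X_p) = X_{R_z(p)}$ together with the functoriality of $R_z$ under composition, multiplicativity of the $\Ftwo[P]$-action, and $\Ftwo$-linearity (using that the local relations are $R_z$-invariant so $R_z$ descends to the quotient $DC(O_1,O_2)_{\bullet/\ell}$) gives exactly the intended justification.
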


Now let $\mathcal{C}$
be the full subcategory of $\Cobdl{\mathcal{D},P}$
containing all objects without closed components,
and let $\mathsf{D}\colon\Mat(\mathcal{D},P)\rightarrow\Mat(\mathcal{C})$
be the delooping functor defined as in Subsection~\ref{subs:delooping}.
The subcategory $\mathcal{C}$ contains two preferred objects:
$O_0:=[a,d]\cup [b,c]$ and $O_1:=[a,b]\cup [c,d]$, where
$[p,q]\subset\mathcal{D}$ denotes the straight line segment
connecting the points $p,q\in P$. Let $\mathcal{C}'$
be the full subcategory of $\mathcal{C}$ over the objects
$O_0$ and $O_1$. (More precisely, $\mathcal{C}'$ contains
all objects that are of the form
$O\{n\}$ where $O\in\{O_0,O_1\}$ and $\{n\}$ is
a grading shift by an arbitrary $n\in\Z$).
Since every object in $\mathcal{C}$ is isotopic rel. boundary
(and hence
isomorphic in $\mathcal{C}$) to exactly one of the
two objects $O_0$ and $O_1$, we can define a natural
functor $\mathsf{S}\colon\mathcal{C}\rightarrow\mathcal{C}'$
by sending $O\in\mathcal{C}$ to
$O_0$ or $O_1$, whichever of the two is isomorphic to $O$.
Of course, this functor extends to $\Mat(\mathcal{C})$ (or
$\Kom(\mathcal{C})$), and we will also write $\mathsf{S}$
for this extended functor.

\begin{definition}\label{def:enhanced}
The \emph{enhanced delooping functor}
is the composition $\mathsf{D}':=\mathsf{S}\circ\mathsf{D}$.
\end{definition}

\begin{lemma}\label{lem:drotationclear}
$\mathsf{D}'(O)$ is isomorphic to $O$
for every $O\in\Mat(\mathcal{D},P)$ (or $\Kom(\mathcal{D},P$)).
\end{lemma}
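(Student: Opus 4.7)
The plan is to factor $\mathsf{D}'=\mathsf{S}\circ\mathsf{D}$ and exhibit natural isomorphisms $\eta^{\mathsf{D}}\colon\mathrm{id}\Rightarrow\mathsf{D}$ and $\eta^{\mathsf{S}}\colon\mathrm{id}\Rightarrow\mathsf{S}$, whose composition yields a natural isomorphism $\eta\colon\mathrm{id}\Rightarrow\mathsf{D}'$. Naturality is the crucial property: it allows the object-level isomorphism $O\cong\mathsf{D}'(O)$ in $\Mat(\mathcal{D},P)$ to be upgraded for free to an isomorphism of chain complexes in $\Kom(\mathcal{D},P)$.

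For the first factor I would apply Lemma~\ref{lem:delooping} componentwise: writing $O=O'\otimes\bigcirc^n$ with $O'\in\mathcal{C}$, set $\eta^{\mathsf{D}}_O:=1_{O'}\otimes G^{\otimes n}\colon O\to\mathsf{D}(O)$, with inverse $1_{O'}\otimes H^{\otimes n}$, and extend additively over $\Mat(\mathcal{D},P)$. Naturality is essentially tautological given Definition~\ref{def:delooping}, in which $\mathsf{D}(S)$ is defined precisely as $\eta^{\mathsf{D}}_{O_2}\circ S\circ(\eta^{\mathsf{D}}_{O_1})^{-1}$. For the second factor I would use the topological observation that every $O'\in\mathcal{C}$ is a pair of disjoint arcs in $\mathcal{D}$ with boundary $P=\{a,b,c,d\}$, and the only non-crossing pairings of these four cyclically arranged points are $(a,d)(b,c)$ and $(a,b)(c,d)$; moreover, for each pairing the rel-boundary isotopy class of arcs in $\mathcal{D}$ is unique. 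Hence $O'$ is isotopic rel.\ boundary to exactly one of $O_0$ or $O_1$, and the trace of such an isotopy in $\mathcal{D}\times[0,1]$ is an invertible undotted product cobordism $\eta^{\mathsf{S}}_{O'}\colon O'\to\mathsf{S}(O')$. Taking $\eta^{\mathsf{S}}_{O'}:=\mathrm{id}$ whenever $O'\in\mathcal{C}'$ and defining $\mathsf{S}$ on morphisms by conjugation by $\eta^{\mathsf{S}}$ makes $\eta^{\mathsf{S}}$ a natural isomorphism by construction.

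Composing gives $\eta_O:=\eta^{\mathsf{S}}_{\mathsf{D}(O)}\circ\eta^{\mathsf{D}}_O$, a natural isomorphism $O\to\mathsf{D}'(O)$ in $\Mat(\mathcal{D},P)$. For a chain complex $(C^*,d^*)\in\Kom(\mathcal{D},P)$, naturality applied to each differential gives $\eta_{C^{i+1}}\circ d^i=\mathsf{D}'(d^i)\circ\eta_{C^i}$, so the degreewise morphisms assemble into a chain-map isomorphism $(C^*,d^*)\to\mathsf{D}'(C^*,d^*)$, proving the lemma in both cases. The one point requiring care is well-definedness of $\eta^{\mathsf{S}}_{O'}$ in $\Cobdl{\mathcal{D},P}$ independently of the chosen isotopy; this is fine because any two rel-boundary isotopies between a pair of non-crossing diagrams of arcs in $\mathcal{D}$ are themselves ambient-isotopic, so their traces coincide as morphisms in $\Cobdl{\mathcal{D},P}$.
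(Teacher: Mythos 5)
Your proof is correct and takes essentially the same approach as the paper, which simply declares the lemma ``clear from the definitions of $\mathsf{D}$ and $\mathsf{S}$''; you have filled in the details the paper omits. In particular, your observation that naturality of the componentwise isomorphisms $\eta^{\mathsf{D}}$ and $\eta^{\mathsf{S}}$ is what upgrades the object-level statement in $\Mat(\mathcal{D},P)$ to a chain-complex isomorphism in $\Kom(\mathcal{D},P)$ is exactly the implicit content of the paper's terse proof.
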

\begin{proof}
Clear from the definitions of $\mathsf{D}$ and $\mathsf{S}$.
\end{proof}

Since $O_0$ and $O_1$ are invariant under
rotation by $180^{\circ}$, the functor
$R_z$ acts as the identity on the set
$\operatorname{Ob}(\mathcal{C}')=\{O_0,O_1\}$.

\begin{definition}\label{def:drotation}
The \emph{dot rotation functor} is the endofunctor
$R_{\bullet}\colon\mathcal{C}'\rightarrow\mathcal{C}'$
which acts as the identity on the set
$\operatorname{Ob}(\mathcal{C}')=\{O_0,O_1\}$ and
which takes a morphism $S$ to the morphism
$$
R_{\bullet}(S):=S+(x_a+x_c)\dd S\,.
$$
\end{definition}

\begin{lemma}\label{lem:drotation1}
$R_z(S)=R_{\bullet}(S)$ for every morphism
$S$ in $\mathcal{C}'$.
\end{lemma}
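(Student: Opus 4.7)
The plan is to prove $R_z(S) = R_\bullet(S)$ by induction on the number $k$ of dots decorating $S$, after a preliminary reduction using the local relations. By $\Ftwo$-linearity of both sides and iterated use of (S), (N), and (D), every morphism of $\mathcal{C}'$ can be written as an $\Ftwo$-linear combination of decorated cobordisms, each of which is either an identity on $O_0$ or $O_1$ or a saddle cobordism between them, possibly decorated with dots. Both $R_z$ (a functor) and $R_\bullet$ (built from the derivation $\dd$, which descends to the quotient by Lemma~\ref{lem:dd}) respect these relations, so it suffices to verify the lemma on such reduced decorated cobordisms.

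For the base case $k = 0$, the cobordism $S$ is an undecorated identity or saddle; since $R_z$ preserves both $O_0$ and $O_1$ setwise and any two minimal-genus cobordisms in $\mathcal{D}\times[0,1]$ with the same boundary are isotopic rel.\ boundary, one has $R_z(S) = S$, while $\dd S = 0$ gives $R_\bullet(S) = S$ as well. For the inductive step ($k \geq 1$), every connected component of a reduced decorated cobordism meets $P$, so any single dot can be slid along its component to a position adjacent to some $q \in P$; this lets us write $S = X_q \cdot S'$ with $S'$ carrying $k - 1$ dots. Functoriality of $R_z$ gives $R_z(S) = X_{R_z(q)} \cdot R_z(S')$, the Leibniz rule (Lemma~\ref{lem:ddproperties}) gives $\dd S = S' + X_q \cdot \dd S'$, and applying the inductive hypothesis to $S'$ reduces the desired equality to
$$
(x_q + x_{R_z(q)} + x_a + x_c)\, S' \;+\; (x_a + x_c)(x_q + x_{R_z(q)})\, \dd S' \;=\; 0.
$$

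Both terms vanish because, on each of $O_0$ and $O_1$, the two endpoints of the same arc yield equal dot-multiplication operators (dots slide along arcs), which forces $x_a + x_b + x_c + x_d = 0$ and $x_q + x_{R_z(q)} = x_a + x_c$ for every $q \in P$. The first expression then becomes $2(x_a + x_c)S' = 0$, while the second reduces to $(x_a + x_c)^2 \dd S' = (x_a^2 + x_c^2)\dd S'$; by the (DD) relation one has $x_p^2 = t$, so this equals $(t + t)\dd S' = 0$. Both cancellations depend essentially on the $\Ftwo$-coefficients, and this is the main obstacle that the paper's hypothesis is designed to overcome: the analogous statement would fail over $\Z$ precisely because $2 \neq 0$.
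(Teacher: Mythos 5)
Your proof is correct, and it takes a genuinely different route to the same algebraic endpoint.

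The paper normalizes each decorated cobordism $S$ in $\mathcal{C}'$ to the form $S = f\cdot S''$, where $S''$ is an \emph{undecorated} identity or saddle (hence $R_z$-invariant) and $f\in\Ftwo[t,x_a,x_c]$ is a single monomial, using the fact that $S''$ has at most two disk components each meeting $\{a\}\times[0,1]$ or $\{c\}\times[0,1]$. It then reduces the lemma to the polynomial identity $r_z(f)=f+(x_a+x_c)\,\partial f$ in $\Ftwo[t,x_a,x_c]$ and checks the four monomials $t^n$, $t^n x_a$, $t^n x_c$, $t^n x_a x_c$ one by one. You instead keep the same preliminary reduction to identity/saddle cobordisms (without normalizing the dot count) and run an induction on the number of dots: the base case is the observation that the undecorated identity and saddle are $R_z$-symmetric, and the inductive step factors out a dot as $S=X_qS'$, applies the Leibniz rule and the inductive hypothesis, and collapses everything using the two relations $x_q+x_{R_z(q)}=x_a+x_c$ and $(x_a+x_c)^2 = x_a^2+x_c^2 = 2t = 0$. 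Both proofs rest on exactly the same algebraic facts — dot-sliding within a component, (DD) giving $x_p^2=t$, and $\Ftwo$ coefficients to kill the factors of $2$ — but your induction makes the argument uniform over $q\in P$ and over the dot count, whereas the paper's version is a finite, fully explicit case check after a somewhat heavier normal-form step; the tradeoff is that the paper's version makes the dependence on $\Ftwo$ maximally visible in the four small computations, while yours isolates it in the two identities above. One small stylistic point: your appeal to ``any two minimal-genus cobordisms with the same boundary are isotopic rel.\ boundary'' is stronger than needed in the base case — the identity and saddle cobordisms are manifestly $R_z$-symmetric, which is all the paper uses.
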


\begin{proof}
Let $S\subset\mathcal{D}\times [0,1]$ be
a decorated cobordism representing a  morphism in $\mathcal{C}'$.
Using the local relations shown in Figures~\ref{fig:relations}
and \ref{fig:ddrelation}, we can write as $S=S'\sqcup t^n=:t^nS'$,
where $t^n$ is a disjoint union of $n\geq 0$ two-spheres,
each or them decorated by exactly three dots, and
$S'$ is a decorated cobordism whose
every component is homeomorphic to a disk and
decorated by at most one dot.
Let $S''$ be the undecorated cobordism underlying $S'$.
Then $S''$ has to be either a saddle cobordism
or one of the two identity cobordisms $O_0\times [0,1]$
or $O_1\times [0,1]$ (as these are the only undecorated
cobordisms in $\mathcal{C}'$ that have the property that
all of their connected compoents are homeomorphic to disks).
In particular, $S''$ is invariant under $R_z$
and has at most two connected components.
Moreover, every connected component of $S''$ contains
at least one of the two segments
$\{a\}\times [0,1]$ or $\{c\}\times [0,1]$,
and this means that we can write $S'$ as
$S'=x_a^{n_a}x_c^{n_c}S''$ for appropriate $n_a,n_c\in\{0,1\}$
(where e.g. $x_ax_cS''$ denotes the decorated cobordism
$X_a\circ X_c\circ S''$
as in Subsection~\ref{subs:dmultiplication}).
Writing $f$ for the monomial
$t^nx_a^{n_a}x_c^{n_c}\in\Ftwo[t,x_a,x_c]$
and using Lemma~\ref{lem:drotation}, we obtain:
$$
R_z(S)=r_z(f)R_z(S'')=r_z(f) S''
$$
One can easily check that $\dd t=0$, and
since $S''$ contains no dots, we also have $\dd S''=0$.
Using Lemma~\ref{lem:ddproperties}
we therefore obtain $\dd S=\dd(f S'')=(\partial f)S''$, where
$\partial\colon\Ftwo[t,x_a,x_c]\rightarrow\Ftwo[t,x_a,x_c]$ is
the $\Ftwo[t]$-linear map defined by
$\partial:=\partial/\partial x_a +\partial/\partial x_c$.
Thus:
$$
R_{\bullet}(S)=\left[f + (x_a + x_c)(\partial f)\right]S''
$$
Comparing the above expressions for $R_z(S)$ and $R_{\bullet}(S)$,
we see that it suffices to prove the equivalence
$r_z(f)\equiv f + (x_a + x_c)(\partial f)$
modulo local relations. We do
this by case by case analysis: if $f=t^n$,
then $r_z(f)=f$ and $\partial f=0$, so the result follows.
If $f=t^nx_a$, then $r_z(f)=t^nx_c$ and $\partial f=t^n$, so
$r_z(f)=t^nx_c=2t^nx_a + t^nx_c=f+ (x_a +x_c)(\partial f)$;
the case $f=t^nx_c$ is analogous.
Finally, if $f=t^nx_ax_c$, then $r_z(f)=f$ and
$$(x_a + x_c)(\partial f)S''=t^n(x_a +x_c)^2S''=t^n(x_a^2 + x_c^2)S''=2t^{n+1}S''=0\,,$$
where we have used the (DD) relation and the fact that
coefficients are in $\Ftwo$.
\end{proof}

\begin{corollary}\label{cor:drotation}
$R_{\bullet}(\mathsf{D}'(O))$ is isomorphic to $R_z(O)$
for all $O\in\Mat(\mathcal{D},P)$ (or $\Kom(\mathcal{D},P)$).
\end{corollary}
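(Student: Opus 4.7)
The plan is to chain together the two preceding lemmas, exploiting the fact that $R_z$ is a functor. First, Lemma~\ref{lem:drotationclear} produces an isomorphism $\phi\colon \mathsf{D}'(O)\to O$ in $\Mat(\mathcal{D},P)$ (or $\Kom(\mathcal{D},P)$). Since $R_z\colon\Cobdl{\mathcal{D},P}\to\Cobdl{\mathcal{D},P}$ is an (additive) functor, its natural extensions to $\Mat(\mathcal{D},P)$ and to $\Kom(\mathcal{D},P)$ preserve isomorphisms, so $R_z(\phi)$ is an isomorphism from $R_z(\mathsf{D}'(O))$ onto $R_z(O)$.

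The remaining step is to identify $R_z(\mathsf{D}'(O))$ with $R_{\bullet}(\mathsf{D}'(O))$. By construction $\mathsf{D}'(O)\in\Mat(\mathcal{C}')$ (or $\Kom(\mathcal{C}')$), and the only objects appearing in $\mathcal{C}'$ are $O_0$ and $O_1$, which are fixed on the nose by $R_z$ and which $R_{\bullet}$ likewise leaves unchanged. For morphisms, Lemma~\ref{lem:drotation1} gives $R_z(S)=R_{\bullet}(S)$ for every $S$ in $\mathcal{C}'$. Both functors extend to $\Mat(\mathcal{C}')$ and $\Kom(\mathcal{C}')$ by acting entrywise on matrices and in each homological degree on chain maps, so the extensions coincide as well. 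Therefore $R_z(\mathsf{D}'(O))=R_{\bullet}(\mathsf{D}'(O))$, and composing with $R_z(\phi)$ yields the desired isomorphism $R_{\bullet}(\mathsf{D}'(O))\cong R_z(O)$.

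I do not anticipate any serious obstacle: the corollary is essentially a formal consequence of Lemma~\ref{lem:drotationclear} and Lemma~\ref{lem:drotation1}. The one point worth verifying, if one wishes to be fully rigorous, is that $R_{\bullet}$ really is a functor on $\mathcal{C}'$ (i.e.\ that it respects composition and the local relations in $\Cobdl{\mathcal{D},P}$). This follows automatically once Lemma~\ref{lem:drotation1} is established, since $R_z$ is a functor and agrees with $R_{\bullet}$ on $\mathcal{C}'$. Alternatively it can be verified directly from the Leibniz rule for $\dd$ (Lemma~\ref{lem:ddproperties}) together with the observation that $(x_a+x_c)^2=x_a^2+x_c^2$ acts as $2t=0$ over $\Ftwo$, so the quadratic cross-term in $R_{\bullet}(S_1)\circ R_{\bullet}(S_2)$ vanishes and compositionality follows.
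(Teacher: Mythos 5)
Your proof is correct and follows essentially the same route as the paper: both reduce the corollary to Lemma~\ref{lem:drotationclear} and Lemma~\ref{lem:drotation1}. The one small difference is in how the isomorphism $R_z(\mathsf{D}'(O))\cong R_z(O)$ is produced: the paper invokes $R_z$-equivariance of $\mathsf{D}'$ to rewrite $\mathsf{D}'(R_z(O))=R_z(\mathsf{D}'(O))$ and then applies Lemma~\ref{lem:drotationclear} to $R_z(O)$, whereas you apply $R_z$ (as a functor, hence isomorphism-preserving) directly to the isomorphism $\mathsf{D}'(O)\cong O$, which sidesteps the explicit equivariance claim. Both are short formal arguments; yours is marginally more economical, and your closing observation that functoriality of $R_{\bullet}$ on $\mathcal{C}'$ comes for free from Lemma~\ref{lem:drotation1} is a sound remark.
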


\begin{proof}
The functors $\mathsf{D}$ and $\mathsf{S}$
are clearly equivariant under the rotation $R_z$,
and hence $\mathsf{D}'=\mathsf{S}\circ\mathsf{D}$
commutes with $R_z$. Using Lemmas~\ref{lem:drotationclear}
and \ref{lem:drotation1},
we thus obtain $R_z(O)\cong\mathsf{D}'(R_z(O))=R_z(\mathsf{D}'(O))
=R_{\bullet}(\mathsf{D}'(O))$.
\end{proof}

\subsection{Dot migration}\label{subs:dotmigration}
Let $T'$ be a tangle diagram in
$\mathcal{D}^c:=\{z\in\C=\R^2\colon|z|\geq 1\}$
with $\partial T'=P=\{a,b,c,d\}$. Assume that
$T'$ has crossed connectivity as in Proposition~\ref{prop:main},
i.e. that it represents a
a tangle
$\mathcal{T}'\subset\mathcal{D}^c\times\R$ which contains
an arc connecting the endpoints $\{a\}\times\{0\}$
and $\{c\}\times\{0\}$.
Let $\alpha\subset T'$ be the projection of this arc,
and let $c_1,\ldots,c_m\subset\alpha$
be the crossings of $T'$ along $\alpha$, enumerated
in the order shown in Figure~\ref{fig:dotmigration}.
\begin{figure}[!h]
$$
\begin{array}{c}
   \includegraphics[height=2.8cm]{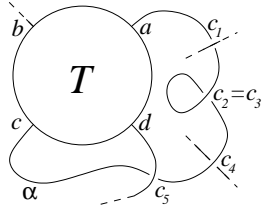}
\end{array}
$$
\caption{\label{fig:dotmigration} Crossings $c_1,\ldots,c_m$ along the arc $\alpha\subset T'$.}
\end{figure}

For $k=2,\ldots,m$, let $e_k\subset\alpha$ be the connected
component of $\alpha\setminus\bigcup_kc_k$ which lies
between $c_{k-1}$ and $c_k$, and let $p_k\in e_k$ denote
the midpoint of $e_k$. Put $p_1:=a$ and $p_{m+1}:=c$.

\begin{definition}\label{def:dmultiplication1}
Let $X_1,\ldots,X_{m+1}$ be the endomorphisms of
$\bigoplus_{i\in\Z}Kh(T')^i\in\Mat(\mathcal{D}^c,P)$
defined by $X_k:=X_{p_k}$, where $X_{p_k}$
is the dot multiplication map
defined in Subsection~\ref{subs:dmultiplication}.
\end{definition}

As explained in the proof of Lemma~\ref{lem:dsquare},
the differential in $Kh(T')$ can be regarded
as an endomorphism $d$ of the object $\bigoplus_{i\in\Z}Kh(T')^i
\in\Mat(\mathcal{D}^c,P)$, and this endomorphism can
be written as a sum $d=\sum_{c\in\chi}d_c$.
Recall that the matrix entries $d_{\epsilon'\epsilon}$
and $(d_c)_{\epsilon'\epsilon}$
are either zero or given by a saddle cobordism
$S_{\epsilon'\epsilon}\subset\mathcal{D}^c\times [0,1]$.
Let $r\colon\mathcal{D}^c\times[0,1]\rightarrow\mathcal{D}^c\times[0,1]$
be the reflection along $\mathcal{D}^c\times\{1/2\}$, and let $d_k:=d_{c_k}$.

\begin{definition}\label{def:dmigration}
The \emph{dot migration homotopies} $h_1,\ldots,h_m$
are the endomorphisms of
$\bigoplus_{i\in\Z}Kh(T')^i
=\bigoplus_{\epsilon\in\{0,1\}^{\chi}} T'_{\epsilon}
\in\Mat(\mathcal{D}^c,P)$
defined by $h_k:=d_k^{\dagger}$
where $(d_k^{\dagger})_{\epsilon'\epsilon}:=r((d_k)_{\epsilon\epsilon'})$.
\end{definition}

Arguing as in the proof of Lemma~\ref{lem:dsquare}, one can easily show:

\begin{lemma}\label{lem:homotopyproperties0}
We have
\begin{enumerate}
\item $h_k\circ h_k=0$,
\item $h_k\circ h_l=h_l\circ h_k$,
\item $h_k\circ d_c=d_c\circ h_k$,
\end{enumerate}
for all $k,l=1,\ldots,m$ and all crossings $c\neq c_k$.
\end{lemma}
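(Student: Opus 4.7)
The plan is to prove the three statements by essentially repeating the argument used in Lemma~\ref{lem:dsquare}, namely (a) a matrix-entry vanishing argument for statement (1), and (b) the principle that distant (reflected) saddles commute up to isotopy for statements (2) and (3).

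For statement (1), I would work at the level of matrix entries. By definition, $(h_k)_{\epsilon'\epsilon} = r((d_k)_{\epsilon\epsilon'})$, and $(d_k)_{\epsilon\epsilon'}$ is nonzero precisely when $\epsilon'<_{c_k}\epsilon$, i.e.\ when $\epsilon(c_k)=1$ and $\epsilon'(c_k)=0$. Thus the entry $(h_k)_{\epsilon'\epsilon}$ is nonzero only when $\epsilon(c_k)=1$ and $\epsilon'(c_k)=0$. Given any three states $\epsilon,\epsilon',\epsilon''\in\{0,1\}^{\chi}$, the nonvanishing of $(h_k)_{\epsilon''\epsilon'}$ would require $\epsilon'(c_k)=1$, while the nonvanishing of $(h_k)_{\epsilon'\epsilon}$ would require $\epsilon'(c_k)=0$; these conditions are contradictory, so at least one of the two entries is zero, and every entry of $h_k\circ h_k$ vanishes.

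For statements (2) and (3), I would argue geometrically. The nonzero matrix entries of $h_k$ are reflected saddle cobordisms $r(S_{\epsilon\epsilon'})$, which agree with the identity cobordism $T'_{\epsilon'}\times[0,1]$ outside a small neighborhood $\mathrm{Nbd}(c_k)\times[0,1]$ of $c_k$. Likewise the nonzero entries of $h_l$ (resp.\ $d_c$) are supported in $\mathrm{Nbd}(c_l)\times[0,1]$ (resp.\ $\mathrm{Nbd}(c)\times[0,1]$), and for $l\neq k$ and $c\neq c_k$ we can shrink these neighborhoods so that they are disjoint from $\mathrm{Nbd}(c_k)\times[0,1]$. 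Then the two corresponding decorated cobordisms can be time-reordered by an ambient isotopy of $\mathcal{D}^c\times[0,1]$ that keeps the support of each one fixed, giving the desired equality of matrix entries, exactly as in the $d_c\circ d_{c'}=d_{c'}\circ d_c$ step of Lemma~\ref{lem:dsquare}.

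The only mildly subtle point is to check that the time-reversal $r$ applied to form $h_k$ does not interfere with the disjointness argument: reflection across $\mathcal{D}^c\times\{1/2\}$ swaps top and bottom but leaves the spatial support of the cobordism unchanged, so the neighborhoods used above remain disjoint after reflection. I do not expect any genuine obstacle; the proof is essentially a transcription of the Lemma~\ref{lem:dsquare} argument, with the combinatorial condition $\epsilon<_{c_k}\epsilon'$ replaced by its reverse wherever an $h_k$ appears.
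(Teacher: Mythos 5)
Your proof is correct and is exactly the expansion the paper intends by its one-line ``arguing as in the proof of Lemma~\ref{lem:dsquare}'': the matrix-entry vanishing argument for (1), and the disjoint-support/time-reordering isotopy for (2) and (3). The only edge case worth flagging is that the list $c_1,\ldots,c_m$ can repeat a self-crossing of $\alpha$, so in (2) one may have $c_k=c_l$ with $k\neq l$, where the disjoint-neighborhood argument does not apply; but then $h_k=h_l$ and the commutation is trivial.
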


The next lemma says that $h_k$ is a homotopy between $X_k$ and
$X_{k+1}$.

\begin{lemma}\label{lem:homotopy}
$d\circ h_k + h_k\circ d = X_k + X_{k+1}$.
\end{lemma}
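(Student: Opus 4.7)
The plan is to reduce the statement to a single-crossing computation and then resolve it by applying the neck-cutting relation locally. First, I would decompose $d = \sum_{c \in \chi} d_c$, so that
\[
d \circ h_k + h_k \circ d = \sum_{c \in \chi} (d_c \circ h_k + h_k \circ d_c).
\]
By Lemma~\ref{lem:homotopyproperties0}(3), each summand with $c \neq c_k$ is a commutator and therefore vanishes over $\Ftwo$. It thus suffices to prove
\[
d_k \circ h_k + h_k \circ d_k = X_k + X_{k+1}.
\]
With respect to the decomposition $\bigoplus_\epsilon T'_\epsilon$, the two compositions are supported on complementary summands: $h_k d_k$ acts non-trivially where $\epsilon(c_k)=0$, and $d_k h_k$ where $\epsilon(c_k)=1$. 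In either case, the relevant matrix entry is the saddle cobordism $S$ at $c_k$ stacked with its vertical reflection $r(S)$.

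The main step is a local analysis inside a small $3$-ball $B^3$ around $c_k$. Each saddle is topologically a disk with four boundary arcs, and the composition of two stacked saddles, glued along the two-arc middle cross-section, has Euler characteristic $1 + 1 - 2 = 0$. Since the local surface is orientable with non-empty boundary, it is an annulus, whose two boundary circles lie on $\partial B^3$. Applying the neck-cutting relation (N) to this annulus replaces it by the sum of two configurations, each consisting of a pair of disks capping the boundary circles, with a single dot on exactly one of the two disks. Glued back into the identity cobordism outside $B^3$, each term becomes the identity cobordism of $T'_\epsilon$ decorated by a single dot.

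To identify the resulting two dotted cobordisms with $X_k$ and $X_{k+1}$, one tracks which global arc of $T'_\epsilon$ each cap disk belongs to. The arc $\alpha$ enters $c_k$ at one corner and exits at the diagonally opposite corner, so its two local strand-ends at $c_k$ lie on different local arcs in both the $0$- and $1$-resolutions. The two boundary circles of the local annulus therefore separate these strand-ends, placing one dot on the arc through $p_k$ and the other on the arc through $p_{k+1}$. Since dot multiplication is insensitive to sliding along the identity cobordism, the two terms from (N) become precisely $X_k$ and $X_{k+1}$, respectively.

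The hard part will be establishing the local topological picture: verifying that the stacked-saddle surface in $B^3$ is indeed a single annulus (as opposed to a disjoint union of other shapes, or a non-orientable surface) so that (N) applies as stated, and then carefully tracing the two boundary circles of this annulus through the diagonal passage of $\alpha$ through $c_k$ to show that each dot lands on the correct global arc. Once this local computation is in place, the reduction in the first paragraph makes the remainder of the proof essentially formal.
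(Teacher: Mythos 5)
Your proof follows essentially the same route as the paper. Both reduce to the single-crossing case via Lemma~\ref{lem:homotopyproperties0}(3), identify the nonzero matrix entry of $d_k\circ h_k + h_k\circ d_k$ as a saddle composed with its vertical reflection, and apply the neck-cutting relation (N) to the resulting tube to obtain $(x_{p_k}+x_{p_{k+1}})\cdot 1$. Your Euler-characteristic identification of the local surface as an annulus, and your explicit tracking of the two dots to the edges $e_k$ and $e_{k+1}$ via the diagonal passage of $\alpha$ through $c_k$, are just slightly more spelled-out versions of what the paper phrases as ``the two components of $\smoothing\times[0,1]$ are connected by a tube,'' so these steps are not genuinely a gap. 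One small point: to conclude that a compact orientable surface with nonempty boundary and $\chi=0$ is an annulus you should also note that the stacked-saddle surface is connected (immediate, since it is two disks glued along two arcs).
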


\begin{proof}
Since $d=\sum_{c\in\chi}d_c$ and since
$h_k$ commutes with $d_c$ for all $c\in\chi$ with $c\neq c_k$,
we have $d\circ h_k + h_k\circ d=d_k\circ h_k +h_k\circ d_k$,
and so it is enough to prove $d_k\circ h_k +h_k\circ d_k=X_k+X_{k+1}$.
Since this is a purely local equation, we can restrict ourselves
to the case where $k=1$ and $\chi=\{c_1\}$, i.e. where $T'$
has only one crossing. Then $Kh(T')=T'_0\oplus T'_1$ (here
we ignore the homological grading and the quantum grading),
where $T'_0$ and $T'_1$ are the crossingless diagrams obtained
by replacing the crossing $c_1$ ($= \slashoverback$) by
its $0$-resolution ($\smoothing$)
and its $1$-resolution ($\hsmoothing$),
respectively. We can regard the differential $d=d_1$ in $Kh(T')$
as an endomorphism of the object $T'_0\oplus T'_1\in\Mat(\mathcal{D}^c,P)$.
As such, it is given by a $2\times 2$ matrix, whose
only non-zero entry is $d_{10}=S_{10}$, where $S_{10}$
is a saddle cobordism (as in Subsection~\ref{subs:bracket}).
Similarly, the homotopy $h:=h_1$ is given by a $2\times 2$-matrix
whose only non-zero entry is the saddle cobordism
$h_{01}=r(S_{10})$. Thus, $(h\circ d)_{00}=r(S_{10})\circ S_{10}$
and $(d\circ h)_{11}= S_{10}\circ r(S_{10})$, and
all other matrix entries in $h\circ d$ and $d\circ h$ are zero.
The cobordism $r(S_{10})\circ S_{10}$ is a composition
of two `opposite' saddle cobordisms, and it is easy to
see that such a composition results in a cobordism looking
like the identity cobordism $T'_0\times [0,1]$, except that the
two components of $\smoothing\times [0,1]$ are connected by a tube.
Applying the (N) relation to this tube, we obtain
$$
(h\circ d)_{00}=r(S_{10})\circ S_{10}= (x_1 + x_2)(T'_0\times [0,1])
= (x_1+x_2)1_{00}
$$
where $1_{00}$ is the identity morphism of $T'_0$. Similarly,
we obtain $(d\circ h)_{11} =(x_1 + x_2)1_{11}$ where $1_{11}$
is the identity morphism of $T'_1$.
Thus, $d\circ h + h\circ d=(x_1 + x_2)1=X_1 +X_2$ as desired.
\end{proof}

\begin{lemma}\label{lem:homotopyproperties}
$h_k\circ d\circ h_k =0$.
\end{lemma}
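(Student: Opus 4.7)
The plan is to combine Lemma~\ref{lem:homotopy} and Lemma~\ref{lem:homotopyproperties0}(1) to reduce the statement to a purely topological observation about the saddle cobordism at $c_k$. Since we are working over $\Ftwo$, Lemma~\ref{lem:homotopy} rearranges to $h_k\circ d = (X_k+X_{k+1}) + d\circ h_k$, and composing this on the right with $h_k$ and using $h_k\circ h_k=0$ gives
$$
h_k\circ d\circ h_k \;=\; (X_k+X_{k+1})\circ h_k + d\circ h_k\circ h_k \;=\; (X_k+X_{k+1})\circ h_k.
$$
It therefore suffices to show $(X_k+X_{k+1})\circ h_k=0$.

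I would verify this identity matrix entry by matrix entry. Each non-zero component $(h_k)_{\epsilon'\epsilon}\colon T'_\epsilon\to T'_{\epsilon'}$ is (the time-reversal of) the saddle cobordism at $c_k$, and outside a small ball around $c_k$ it coincides with the identity cobordism. Locally at $c_k$ the saddle itself is a single disk whose boundary meets all four strand endpoints of the crossing; and the vertical segments $\{p_k\}\times[0,1]$ and $\{p_{k+1}\}\times[0,1]$, which lie in the identity part of the cobordism on the edges $e_k$ and $e_{k+1}$ of $\alpha$ immediately adjacent to $c_k$, are each joined along $\alpha$ to (adjacent) corners of this saddle disk. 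Since the disk is connected, both vertical segments lie on the same connected component of $(h_k)_{\epsilon'\epsilon}$.

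Because morphisms in $\Cobdl{\mathcal{D}^c,P}$ are isotopy classes of decorated cobordisms, a single dot may be isotoped freely within any one connected component of the underlying surface without changing the morphism. Hence $X_k\circ(h_k)_{\epsilon'\epsilon}=X_{k+1}\circ(h_k)_{\epsilon'\epsilon}$ for every $\epsilon,\epsilon'$, and summing over $\epsilon,\epsilon'$ we conclude $(X_k+X_{k+1})\circ h_k=2\,X_k\circ h_k=0$ over $\Ftwo$. The only substantive step is the connectivity claim in the second paragraph; once one trusts the local saddle-disk picture, the remainder of the argument is formal homotopy algebra.
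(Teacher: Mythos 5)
Your proof is correct and follows essentially the same strategy as the paper: rearrange Lemma~\ref{lem:homotopy}, kill the $h_k\circ h_k$ term via Lemma~\ref{lem:homotopyproperties0}(1), and then observe that $\{p_k\}\times[0,1]$ and $\{p_{k+1}\}\times[0,1]$ lie on the same connected component of the saddle cobordism at $c_k$, so $(X_k+X_{k+1})\circ h_k=2\,X_k\circ h_k=0$ over $\Ftwo$. The only cosmetic differences are that you argue directly about the general saddle rather than first reducing to the single-crossing case as the paper does, and that you compose $X_k+X_{k+1}$ on the left of $h_k$ instead of the right, which is immaterial since dot multiplication commutes with all morphisms.
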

\begin{proof}
By the previous lemma, we have
$d\circ h_k=h_k\circ d + X_k + X_{k+1}$, and inserting this
into $h_k\circ d\circ h_k$, we obtain
$h_k\circ d\circ h_k= h_k\circ h_k \circ d + h_k\circ (X_k + X_{k+1})$.
The first term on the right-hand side vanishes because $h_k\circ h_k=0$,
and to see that the second term vanishes, we can assume
that $T'$ consists of a single crossing, i.e. $k=1$
and $\chi=\{c_1\}$ as in the proof of the previous lemma.
Then $(h_1)_{01}=r(S_{10})$ 
(as in the proof of the previous lemma),
and since the cobordism $r(S_{10})$ has only one connected component, we
have $x_1 r(S_{10})=x_2 r(S_{10})$, whence $h_1\circ X_1=h_1\circ X_2$.
Using that coefficients are in $\Ftwo$, we get
$h_1\circ (X_1 + X_2)= 2h_1\circ X_1=0$.
\end{proof}


\section{Proof of Proposition~\ref{prop:main}}
\label{s:proof}
In this section, we use the notations of Section~\ref{s:mutation}.
assume that the hypotheses of
Proposition~\ref{prop:main} are satisfied. Thus,

In particular, $T$ denotes a tangle
diagram in the unit disk $\mathcal{D}\subset\R^2$,
and $T'$ a tangle diagram
in $\mathcal{D}^c:=\R^2\setminus\operatorname{Int}(\mathcal{D})$.
The endpoints of $T$ and $T'$ lie in the set
$\partial T=\partial T'=P=\{a,b,c,d\}\subset\partial\mathcal{D}$.
As in Proposition~\ref{prop:main}, we assume that
$T'$ represents a tangle $\mathcal{T}'\subset\mathcal{D}^c\times\R\subset\R^3$
which has crossed connectivity, i.e. contains an arc connecting
the endpoints $\{a\}\times\{0\}$ and $\{c\}\times\{0\}$.
We also assume that the mutation is a $z$-mutation, i.e.
that it consists in replacing $T$ by $R_z(T)$.
Let $L:=T\cup T'$ and $L':=R_z(T)\cup T'$ denote the link
diagrams before and after mutation.
Using the tensor product theorem (Theorem~\ref{thm:tensor}),
we can write the formal Khovanov brackets of $L$ and $L'$
as:
$$
Kh(L)=Kh(T)\otimes Kh(T')\qquad\text{and}\qquad
Kh(L')=Kh(R_z(T))\otimes Kh(T')
$$
Let $\mathcal{C}'\subset\Cobdl{\mathcal{D},P}$ be the
full subcategory generated by the two objects
$O_0:=[a,d]\cup[b,c]$ and $O_1:=[a,b]\cup[c,d]$ where
$[p,q]\subset\mathcal{D}$ denotes the straight line
segment connecting the points $p,q\in P$ as in
Subsection~\ref{subs:drotation}.
Let $\mathsf{D}'\colon\Mat(\mathcal{D},P)\rightarrow\Mat(\mathcal{C}')$
denote the enhanced delooping functor (Definition~\ref{def:enhanced})
and $R_{\bullet}\colon\Mat(\mathcal{C}')\rightarrow\Mat(\mathcal{C}')$
the dot rotation functor (Definition~\ref{def:drotation}).
By Lemma~\ref{lem:drotationclear}, $Kh(T)$
is isomorphic to $\mathsf{D}'(Kh(T))$,
and hence $Kh(L)$ is isomorphic to the complex
$$
A:=\mathsf{D}'(Kh(T))\otimes Kh(T')
$$
Since the construction of $Kh(T)$ is equivariant with respect to
the rotation $R_z$, we have $Kh(R_z(T))=R_z(Kh(T))$.
Moreover, Corollary~\ref{cor:drotation} implies that
$R_z(Kh(T))$ is isomorphic to $R_{\bullet}(\mathsf{D}'(Kh(T)))$,
and hence $Kh(L')$ is isomorphic to the complex
$$
B:=R_{\bullet}(\mathsf{D}'(Kh(T)))\otimes Kh(T')
$$

To prove Proposition~\ref{prop:main}, it is now enough to show $A$ is 
isomorphic to $B$. By definition, $R_{\bullet}$ acts
as the identity on the
set $\operatorname{Ob}(\mathcal{C}')=\{O_0,O_1\}$, and so
we have $A=B$ if we ignore the differentials in $A$
and $B$ (i.e. if we just consider the
objects
$\bigoplus_{i\in\Z} A^i$ and
$\bigoplus_{i\in\Z} B^i$ of $\Mat(\R^2,\emptyset)$
instead of the actual complexes $A=(A^*,d^*_A)$
and $B=(B^*,d^*_B)$).
The differentials in $A$ and $B$ are given by
$$
d_A=\delta\otimes 1 + 1\otimes d\qquad\text{and}\qquad
d_B=R_{\bullet}(\delta)\otimes 1 + 1\otimes d\,,
$$
where $\delta$ is the differential
in $\mathsf{D}'(Kh(T))$ and $d$ is the differential in
$Kh(T')$,
and $1$ stands for an identity morphism. To prove
that the complexes $A$ and $B$ are isomorphic,
we must therefore construct an automorphism $\varphi$
of the object $A=B\in\Mat(\R^2,\emptyset)$ which
satisfies $\varphi\circ d_A=d_B\circ\varphi$.

Let $\mathcal{T}'\subset\mathcal{D}^c\times\R$ be the tangle
represented by $T'\subset\mathcal{D}^c$.
Let $\alpha\subset T'$ the projection
of the arc of $\mathcal{T}'$ connecting $\{a\}\times\{0\}$ to $\{c\}\times\{0\}$,
and let $c_1,\ldots,c_m$ be the sequence of crossings along $\alpha$,
as in Figure~\ref{fig:dotmigration}. As in Subsection~\ref{subs:dotmigration},
we denote $h_1,\ldots,h_m$ the dot migration homotopies
(Definition~\ref{def:dmigration}) and by $X_1,\ldots,X_{m+1}$
the maps $X_k:=X_{p_k}$ (Definition~\ref{def:dmultiplication1}).
For $k=1,\ldots,m$, we define $\varphi_k$ to
be the endomorphism of $A=B\in\Mat(\R^2,\emptyset)$ given by
$$
\varphi_k := 1\otimes 1 + (\dd \delta)\otimes h_k
$$
where $\dd$ is the derivative with respect to the
dot (Definition~\ref{def:dderivative}).
\begin{definition} Let $\varphi$ be the composition
$\varphi:=\varphi_1\circ\ldots\circ \varphi_m\in
\operatorname{End}_{\Mat(\R^2,\emptyset)}(A=B)$.
\end{definition}

Using Lemma~\ref{lem:homotopyproperties0}
and the fact that coefficients are in $\Ftwo$, it is easy to check that
$\varphi_k\circ\varphi_k=1\otimes 1$ and
$\varphi_k\circ\varphi_l=\varphi_l\circ\varphi_k$ for
all $k,l$, and hence also $\varphi\circ\varphi=1\otimes 1$.
In particular, $\varphi$ is invertible.

\begin{remark}
Since every self-crossing of $\alpha$ appears twice
in the list $c_1,\ldots,c_m$, every endomorphism
$\varphi_k$ corresponding to a self-crossing of $\alpha$
appears twice in $\varphi$.
Since $\varphi_k$ squares
to the identity, we can thus ignore all self-crossings
of $\alpha$, and define $\varphi$ as the product over all
$\varphi_k$ for which $c_k$ is not a self-crossing of $\alpha$.
\end{remark}

To see
that $\varphi$ satisfies $\varphi\circ d_A=d_B\circ \varphi$
as desired, we need several technical lemmas.

\begin{lemma}\label{lem:phicommute}
$\varphi$ commutes with $\delta\otimes 1$.
\end{lemma}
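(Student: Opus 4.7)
The plan is to reduce the statement to commutativity of the individual factors $\varphi_k$ with $\delta \otimes 1$; since $\varphi$ is a composition of the $\varphi_k$, if each factor commutes with $\delta \otimes 1$ then so does $\varphi$.

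To check this for a single $\varphi_k$, I would expand the commutator directly. Using bilinearity of the tensor product and the fact that $1\otimes 1$ commutes with everything, the cross terms involving the identity part of $\varphi_k = 1\otimes 1 + (\dd\delta)\otimes h_k$ cancel, and one gets
\[
\varphi_k \circ (\delta\otimes 1) - (\delta\otimes 1)\circ \varphi_k
= \bigl((\dd\delta)\circ\delta - \delta\circ(\dd\delta)\bigr)\otimes h_k,
\]
where the tensor factor $h_k$ pulls out because the right-hand tensor factor of $\delta\otimes 1$ is the identity. No Koszul signs arise because we work over $\Ftwo$ (and the tensor differential in Subsection~3.5 is defined without signs).

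The key step is then the observation that $\delta$ is the differential of a chain complex, so $\delta\circ\delta = 0$. Corollary~\ref{cor:ddcommute} applies (its extension to $\Mat$-level morphisms, noted right after that corollary) and tells us that $\delta$ commutes with $\dd\delta$. Hence the right-hand side above vanishes, and $\varphi_k$ commutes with $\delta\otimes 1$ for every $k$.

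There is essentially no obstacle: the lemma is a direct consequence of the Leibniz rule structure of $\dd$ (Lemma~\ref{lem:ddproperties}) together with the vanishing $\delta^2=0$, packaged into Corollary~\ref{cor:ddcommute}. The only bookkeeping point to watch is that $\dd\delta$ is interpreted as a morphism of the underlying object $\bigoplus_i \mathsf{D}'(Kh(T))^i$ of $\Mat(\mathcal{D},P)$ (via the extension $\dd(F_{ij}):=(\dd F_{ij})$), so that the tensor $(\dd\delta)\otimes h_k$ makes sense as an endomorphism of $A=B$ in $\Mat(\R^2,\emptyset)$.
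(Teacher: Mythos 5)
Your proof is correct and follows exactly the paper's own approach: reduce to commutativity of each $\varphi_k$ with $\delta\otimes 1$, expand the commutator, and invoke Corollary~\ref{cor:ddcommute} (using $\delta^2=0$) to conclude that $\dd\delta$ commutes with $\delta$. The paper simply states this more tersely, so no further comment is needed.
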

\begin{proof}
Corollary~\ref{cor:ddcommute} tells us that
$\dd \delta$ commutes with $\delta$, and this immediately
implies that each $\varphi_k$ (and hence also $\varphi$) 
commutes with $\delta\otimes 1$.
\end{proof}

\begin{lemma}\label{lem:phimigration}
$\varphi_k\circ (1\otimes d)\circ\varphi_k^{-1} =
1\otimes d + (\dd\delta)\otimes (X_k + X_{k+1})$.
\end{lemma}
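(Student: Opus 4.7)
The plan is a direct four-term expansion. First, I would record that $\varphi_k^{-1}=\varphi_k$: squaring $\varphi_k=1\otimes 1+(\dd\delta)\otimes h_k$ gives a cross term and a quadratic term, the cross term vanishes in characteristic~$2$, and the quadratic term $(\dd\delta)^2\otimes h_k^2$ vanishes by Lemma~\ref{lem:homotopyproperties0}(1). This is already indicated in the text, so I would only cite it.

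Next, I would expand
\[
\varphi_k\circ(1\otimes d)\circ\varphi_k=\bigl(1\otimes 1+(\dd\delta)\otimes h_k\bigr)\circ(1\otimes d)\circ\bigl(1\otimes 1+(\dd\delta)\otimes h_k\bigr)
\]
into four summands, using the composition rule $(f\otimes g)\circ(f'\otimes g')=(f\circ f')\otimes(g\circ g')$ (valid without Koszul signs since we work over $\Ftwo$ and the tensor differential in Subsection 3.5 is defined with a $+$ sign). The summands are $1\otimes d$, $(\dd\delta)\otimes(d\circ h_k)$, $(\dd\delta)\otimes(h_k\circ d)$, and $(\dd\delta)^2\otimes(h_k\circ d\circ h_k)$.

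The fourth summand vanishes by Lemma~\ref{lem:homotopyproperties}, which gives $h_k\circ d\circ h_k=0$. The two middle summands combine to $(\dd\delta)\otimes(d\circ h_k+h_k\circ d)$, and Lemma~\ref{lem:homotopy} then replaces the inner factor by $X_k+X_{k+1}$. Collecting the surviving terms yields exactly the identity claimed.

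There is no serious obstacle: the formula for $\varphi_k$ has been precisely engineered so that conjugating $1\otimes d$ produces the anticommutator $\{d,h_k\}$ in the second tensor slot, and the two preceding lemmas are tailored to compute this anticommutator and to kill the unwanted quadratic remainder. The only subtle point is that the cancellation of the quadratic summand uses the stronger identity $h_k\circ d\circ h_k=0$ (Lemma~\ref{lem:homotopyproperties}) rather than the weaker $h_k\circ h_k=0$, since here a copy of $d$ is sandwiched between the two factors of $h_k$.
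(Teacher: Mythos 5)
Your proposal is correct and follows essentially the same route as the paper: expand the conjugation into four tensor summands, apply Lemma~\ref{lem:homotopy} to the anticommutator $d\circ h_k+h_k\circ d$, and apply Lemma~\ref{lem:homotopyproperties} to kill the quadratic summand $(\dd\delta)^2\otimes(h_k\circ d\circ h_k)$. The only difference is that you spell out the expansion in more detail and flag explicitly that the quadratic term needs $h_k\circ d\circ h_k=0$ rather than merely $h_k^2=0$, which is a useful observation but not a departure from the paper's argument.
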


\begin{proof}
Direct calculation using
$\varphi_k=\varphi_k^{-1}=1\otimes 1 +(\dd d)\otimes h_k$
yields
$$
\varphi_k\circ (1\otimes d)\circ\varphi_k^{-1}
=1\otimes d +  (\dd\delta)\otimes (d\circ h_k + h_k\circ d) +
(\dd\delta)^2\otimes (h_k\circ d \circ h_k)
$$
and now the claim follows from Lemmas~\ref{lem:homotopy} and
\ref{lem:homotopyproperties}.
\end{proof}

\begin{corollary}\label{cor:phimigration}
$\varphi\circ (1\otimes d)\circ\varphi^{-1} =
1\otimes d + (\dd\delta)\otimes (X_a + X_c)$.
\end{corollary}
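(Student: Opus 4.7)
The plan is to prove the corollary by downward induction on $k \in \{1, \ldots, m\}$, establishing the intermediate identity
$$
(\varphi_k \circ \cdots \circ \varphi_m) \circ (1 \otimes d) \circ (\varphi_m \circ \cdots \circ \varphi_k) \;=\; 1 \otimes d + (\dd\delta) \otimes (X_k + X_{m+1}).
$$
The base case $k = m$ is Lemma~\ref{lem:phimigration}. Since the $\varphi_j$ commute pairwise and each is an involution, we have $\varphi^{-1} = \varphi_m \circ \cdots \circ \varphi_1$; taking $k = 1$ and using $X_1 = X_a$, $X_{m+1} = X_c$ then yields the corollary.

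For the inductive step, assume the identity for $k+1$ and pre-/post-compose with $\varphi_k$ and $\varphi_k^{-1} = \varphi_k$. Using that the $\varphi_j$ commute, the resulting expression splits linearly as
$$
\varphi_k \circ (1\otimes d) \circ \varphi_k \;+\; \varphi_k \circ \bigl((\dd\delta) \otimes (X_{k+1} + X_{m+1})\bigr) \circ \varphi_k.
$$
Lemma~\ref{lem:phimigration} evaluates the first summand as $1\otimes d + (\dd\delta)\otimes (X_k + X_{k+1})$. I will show that the second summand equals $(\dd\delta) \otimes (X_{k+1} + X_{m+1})$; combining these and using $X_{k+1} + X_{k+1} = 0$ over $\Ftwo$ then produces the identity for $k$.

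To prove the claim on the second summand, I expand $\varphi_k = 1\otimes 1 + (\dd\delta)\otimes h_k$ to obtain, for $Y = X_{k+1} + X_{m+1}$,
$$
\varphi_k \circ \bigl((\dd\delta)\otimes Y\bigr) \circ \varphi_k \;=\; (\dd\delta)\otimes Y + (\dd\delta)^2 \otimes (h_k Y + Y h_k) + (\dd\delta)^3 \otimes (h_k Y h_k).
$$
The last term vanishes by Lemma~\ref{lem:homotopyproperties0}(1), which gives $h_k \circ h_k = 0$. For the middle term, the key observation is that the points $p_{k+1}$ (the midpoint of $e_{k+1}$) and $p_{m+1} = c$ both lie outside any sufficiently small neighborhood of the crossing $c_k$ in which the non-identity matrix entries of $h_k$ are supported. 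Consequently $X_{k+1}$ and $X_{m+1}$ each commute with $h_k$ as decorated cobordisms, so $h_k Y + Y h_k = 2 h_k Y = 0$ over $\Ftwo$.

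The main obstacle is precisely this commutation $[X_{k+1}, h_k] = [X_{m+1}, h_k] = 0$. Geometrically, it amounts to the observation that the dot-trajectory $\{p\} \times [0,1]$ can be isotoped freely past the local saddle of $h_k$ whenever the point $p$ lies outside the saddle's support, which is guaranteed by the choices of the midpoints $p_j$ and of the boundary point $c = p_{m+1}$.
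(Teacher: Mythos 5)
Your argument is correct and follows essentially the same route as the paper's proof: the paper applies Lemma~\ref{lem:phimigration} once per factor $\varphi_k$ and lets the correction terms telescope by observing that each $X_l$ commutes with every $h_k$ (and hence with every $\varphi_j$), which you package as a downward induction with the same ingredients. One minor point of order: for the term $(\dd\delta)^3\otimes(h_k Y h_k)$ with $Y=X_{k+1}+X_{m+1}$, citing only $h_k\circ h_k=0$ is not quite enough on its own; you need the commutation $[Y,h_k]=0$ first (which you do establish for the middle term), so that $h_k Y h_k = Y h_k h_k = 0$. With that small reordering the argument is complete and matches the paper's.
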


\begin{proof}
Recall that $X_l=X_{p_l}=x_{p_l}1$ and from this it
easily follows that $X_l\circ h_k=x_{p_l}h_k=h_k\circ X_ö$
for all $k,l$. Thus $\varphi_k$ commutes with $(\dd\delta)\otimes X_l$
for all $k,l$.
Recalling that $\varphi=\varphi^{-1}=\varphi_1\circ\ldots\circ\varphi_m$
and using Lemma~\ref{lem:phimigration} repeatedly, one can now conclude
$$
\varphi\circ (1\otimes d)\circ\varphi^{-1} = d\otimes 1 +
(\dd\delta)\otimes\left[(X_1 + X_2) + (X_2 + X_3) + \ldots + (X_m + X_{m+1})\right]
$$
and the telescope sum in the square brackets
collapses to $X_1+X_{m+1}$ because all
intermediate terms appear twice and hence cancel.
Since $p_1=a$ and $p_{m+1}=c$ (see Subsection~\ref{subs:dotmigration}),
we have $X_1=X_a$ and $X_{m+1}=X_c$, whence
$X_1+X_{m+1}=X_a+X_c$.
\end{proof}

We are now ready to prove Proposition~\ref{prop:main}.

\begin{proof}[Proof of Proposition~\ref{prop:main}]
We have to show that
$\varphi \circ d_A\circ \varphi^{-1} = d_B$.
This is now a direct calculation:
$$
\begin{array}{rcl}
\varphi \circ d_A\circ\varphi^{-1} &\stackrel{(1)}{=}&
\varphi \circ\left(\delta\otimes 1 +
1\otimes d\right)\circ\varphi^{-1}\\
&\stackrel{(2)}{=}&
\delta\otimes 1 + \varphi\circ (1\otimes\delta)\circ\varphi^{-1}\\
&\stackrel{(3)}{=}&
\delta\otimes 1 + 1\otimes d + (\dd \delta)\otimes (X_a + X_c)\\
&\stackrel{(4)}{=}&
\delta\otimes 1 + 1\otimes d +
\left((x_a+x_c)(\dd \delta)\right)\otimes 1\\
&\stackrel{(5)}{=}&
R_{\bullet}(\delta)\otimes 1 + 1\otimes d\\
&\stackrel{(6)}{=}& d_B\,.
\end{array}
$$
Equalities (1) and (6) are the
definitions of $d_A$ and $d_B$, respectively.
Equality (5) is the definition of $R_{\bullet}$.
Equality (2) follows because $\varphi$ commutes with
$\delta\otimes 1$ by Lemma~\ref{lem:phicommute}.
Equality (3) is Corollary~\ref{cor:phimigration}.
To see (4), observe that
$1\otimes X_a=X_a\otimes 1$
because $1\otimes X_a$ and $X_a\otimes 1$
are both obtained from the identity morphism $1\otimes 1$
by inserting a dot into 
the line segment $\{a\}\times [0,1]$
(cf. Definitions~\ref{def:dmultiplication} and
\ref{def:dmultiplication1}).
Therefore:
$$
(\dd \delta)\otimes X_a=
(1\otimes X_a)\circ \left[(\dd \delta)\otimes 1\right]
= (X_a\otimes 1)\circ \left[(\dd \delta)\otimes 1\right]
= (x_a\dd \delta)\otimes 1\,,
$$
and similarly: $(\dd \delta)\otimes X_c=(x_c\dd \delta)\otimes 1$.
\end{proof}


\noindent
{\bf Acknowledgments.} I would like to thank Dror Bar-Natan
for many valuable discussions, in which he
generously shared his ideas on mutation invariance with me.
I would also like to thank Dror for letting
me use a figure from his paper \cite{barnatan-2005-9}
for my Figure~\ref{fig:relations}.
This work was supported by fellowships of the
Swiss National Science Foundation
and of the Fondation Sciences math\'ematiques de Paris.
\bibliography{mutation1}
\end{document}